\theoremstyle{plain}
\newtheorem{theorem}{Theorem}
\newtheorem{lemma}[theorem]{Lemma}
\newtheorem{proposition}[theorem]{Proposition}
\newtheorem{remark}[theorem]{Remark}
\newtheorem{corollary}[theorem]{Corollary}
\numberwithin{equation}{section}
\numberwithin{theorem}{section}
\newcommand{\eqdef }{\overset{\mbox{\tiny{def}}}{=}}
\newcommand{\rth}{{\mathbb{R}^3}}
\newcommand{\pv}{p}
\newcommand{\pZ}{\pv^0}
\newcommand{\pZp }{\pv'^{0}}
\newcommand{\qv}{q}
\newcommand{\qZ }{\qv^0}
\newcommand{\qZp }{\qv'^{0}}
\title[Steady Solutions to the Relativistic Boltzmann Equation in a Slab]{Steady Solutions to the Relativistic Boltzmann Equation in a Slab}
\author[J. W. Jang]{Jin Woo Jang$^\dagger$}
\address{$^\dagger$ Department of mathematics, POSTECH (Pohang University of Science and Technology), Pohang 37673, Republic of Korea. \href{mailto:jangjw@postech.ac.kr}{jangjw@postech.ac.kr} }
\author[S.-B. Yun]{Seok-Bae Yun$^\ddagger$}
\address{$^\ddagger$Department of mathematics, Sungkyunkwan University, Suwon 440-746, Republic of Korea. \href{mailto:sbyun01@skku.edu}{sbyun01@skku.edu} }
   \def\MR#1{}
\begin{document}

\date{\today}

\let\thefootnote\relax\footnotetext{2010 \textit{Mathematics Subject Classification.} Primary: 35Q20, 76P05, 82C40,
	35B65, 83A05. \\ 
	\textit{Key words and phrases.}  Relativistic Boltzmann equation; steady states;
hyperplane integrability; inverse Laplacian regularity; slab domain
}
\addtocounter{footnote}{-1}\let\thefootnote\svthefootnote
\begin{abstract}
We study steady solutions to the relativistic Boltzmann equation with hard-sphere
interactions in a slab geometry.
Under a spatial symmetry assumption in the transverse variables $x_2$ and $x_3$,
the problem reduces to a one-dimensional spatial slab $x_1 \in [0,1]$ while
retaining full three-dimensional momentum dependence.
For non-negative inflow boundary conditions prescribed at $x_1=0$ and $x_1=1$,
we prove the existence and uniqueness of a stationary solution in a weighted
$L^1_p L^\infty_{x_1}$ framework, together with exponential decay in momentum.

Our analysis treats the full slab domain and does not rely on any smallness
assumption on the slab width.
We establish sharp coercivity and continuity estimates for the collision
frequency, together with weighted convolution and pointwise bounds for the
nonlinear gain term.
These estimates generate and propagate a $(-\Delta_p)^{-1}$-type regularity
within the solution framework, which plays a crucial role in the existence and
uniqueness argument.
In addition, we obtain uniform weighted integrability of the solution over
arbitrary two-dimensional hyperplanes through the origin.
This hyperplane estimate is derived as a genuinely \textit{a posteriori}
regularity property, without imposing any \textit{a priori} hyperplane bounds, and
follows from a Lorentz-invariant geometric reduction.
\end{abstract}

\setcounter{tocdepth}{1}

\allowdisplaybreaks

\maketitle
\tableofcontents

\thispagestyle{empty}
\section{Introduction}

This work concerns the stationary relativistic Boltzmann equation.
Incorporating Einstein's theory of special relativity, the relativistic
Boltzmann equation describes the statistical evolution of a dilute gas
whose particles move at relativistic speeds
\cite{DeGroot,MR1898707}.
It provides one of the fundamental dynamical models in relativistic kinetic
theory and plays an important role in the mathematical description of
high–energy particle systems, astrophysical plasmas, and relativistic gas
flows.

While the time–dependent relativistic Boltzmann equation has been studied
extensively in recent decades, much less is known about its stationary
counterpart.  Stationary kinetic equations arise naturally when a gas
reaches a time–independent configuration under the influence of boundary
forcing or external constraints.  Such steady states describe long–time
regimes of nonequilibrium systems, boundary–driven flows, and kinetic
transport in confined geometries.  Understanding their existence and
structure is therefore a fundamental step toward the mathematical theory
of kinetic boundary value problems.

\subsection{Stationary solutions to collisional kinetic equations}

As emphasized by Cercignani~\cite{MR1744523}, establishing the existence of
stationary solutions to the Boltzmann equation under physically meaningful
boundary conditions is a central problem in kinetic theory.
In contrast with the Cauchy problem, however, the theory of stationary
boundary-value problems remains considerably less developed
\cite{Vil02}.  
One major reason is that stationary equations lack the smoothing and
dissipative mechanisms that are often available in time-dependent
problems.  As a consequence, many of the compactness and energy methods
developed for the evolution equation do not directly apply in the steady
setting.

The study of stationary solutions to collisional kinetic equations,
and in particular to the Boltzmann equation, nevertheless has a long
history.  Existence and regularity results for stationary boundary-value
problems have been obtained in a variety of settings; see, for example,
\cite{MR459450,MR1413668,MR2134697,MR1991144,MR1375351,MR1842024,
MR2130642,MR2122556,MR1766902,MR4419612,EGKM-18-AP,MR3085665,
MR4131014,MR406275,MR3926521,2503.12587} and the references therein.
These works illustrate both the richness of stationary kinetic phenomena
and the substantial analytical difficulties that arise when one attempts
to construct steady solutions under realistic boundary conditions.

For relativistic kinetic equations, the study of stationary states
dates back to the introduction of the Maxwell--J\"uttner distribution by
J\"uttner~\cite{Juttner} in 1911.
Its mathematical properties were later revisited in several works,
including the study of Abonyi~\cite{MR122493} in 1960.
More recently, Hwang and Yun~\cite{1801.08382} investigated stationary
solutions to the relativistic BGK model of Marle type in a slab geometry.
Very recently, Wang, Li, and Jiang~\cite{2411.06533} studied the
stationary relativistic Boltzmann equation on a half-line for hard
potentials and proved that boundary profiles given as small perturbations
of a relativistic Maxwellian decay asymptotically to equilibrium in the
$L^\infty$ sense as $x_1 \to \infty$.
On the other hand, Ouyang and Xiao~\cite{MR4891827} established the
existence and spatial decay of boundary layer solutions to steady
relativistic Boltzmann equations with hard potentials under diffusive
reflection boundary conditions.

Taken together, these works highlight both the analytical challenges and
the relative scarcity of rigorous results for stationary relativistic
Boltzmann equations, particularly in bounded or semi-infinite spatial
domains. This motivates the present study of stationary solutions to the
relativistic Boltzmann equation in a slab geometry with inflow
boundary conditions.

\subsection{Stationary relativistic Boltzmann equation.}
The stationary relativistic Boltzmann equation that we consider in this paper reads as
\begin{align}\label{RBE}
\begin{split}
\hat{p}\cdot \nabla_x F= Q(F,F),\end{split}
\end{align}
where the particle distribution function $F(x,p)$ represents the probabilistic density function of particles at position $x=(x_1,x_2,x_3)\in \Omega \subset \rth$ with momentum $p\in\mathbb{R}^3$. 
The collision operator $Q(f,h)$ then can be written as
\begin{equation*}\label{Q original}
Q(f,h)=\frac{1}{\pZ}\int_{\mathbb{R}^3}\frac{dq}{{\qZ }}\int_{\mathbb{R}^3}\frac{dq'\hspace{1mm}}{{\qZp }}\int_{\mathbb{R}^3}\frac{dp'\hspace{1mm}}{{\pZp }}  W(p,q|p',q')(f(p')h(q')-f(p)h(q))
\end{equation*}
where we denote the particle energy and the velocity as $cp^0$ and $\hat{p}$, respectively, of a particle with the rest mass $m$ and the momentum $p$ where
\begin{equation}
    \label{pZ}
    p^0\eqdef \sqrt{m^2c^2+|p|^2},\ \textup{ and }\hat{p}\eqdef \frac{cp}{p^0}.
\end{equation}
Here the transition rate $W(p,q|p',q')$ is given by
\begin{align}\label{W}
W(p,q|p',q')=\frac{c}{2}s\sigma(g,\theta)\delta^{(4)}(p^\mu +q^\mu -p'^\mu -q'^\mu),
\end{align}
where $\sigma(g,\theta)$ is the scattering kernel measuring the interactions between particles, $s$ and $g$ are two variables depending on $(p^0,p)$ and $(q^0,q)$ (whose definition will further be introduced in \eqref{s} and \eqref{g}), and the Dirac-delta function, $\delta^{(4)}$, enforces the conservation of energy and momentum \eqref{collision.invariants}.  Without loss of generality, we normalize the speed of light $c$ and the rest mass $m$ to be equal to 1 through the rest of the paper.   Other necessary notations will be defined in the next section. 
\subsection{Hypothesis on the scattering kernel}\label{sec.ang.hypo}
Throughout the paper, we assume that the relativistic Boltzmann cross-section $\sigma$ will be written as the following product-type form:
\begin{equation}
    \label{kernel condition}
    \sigma(g,\theta)\approx g\sigma_0(\theta).
\end{equation} Such a representation describes the relativistic hard-sphere case. Here we assume that $\sigma_0\in L^1(\mathbb{S}^2)$ and $0\le \sigma_0(\theta)\le M$ for some positive constant $M>0.$ A generic choice of $\sigma_0(\theta)$ is $\sin^\gamma(\theta)$ for $\gamma\ge 0$, as in \cite{MR1211782} for instance. This scattering kernel covers a broad range of generic relativistic hard-sphere interactions classified by Dudy\`nski and Ekiel-Je$\dot{\textup{z}}$ewska in  \cite{Dudynski2} with angular cutoff. Without loss of generality, we normalize and assume that $M=\frac{1}{4\pi}$ such that \begin{equation}
    \label{angular assumption}\int_{\mathbb{S}^2}\sigma_0(\theta)d\omega=c_0\le 1
\end{equation} through the rest of the paper.
\subsection{Steady states under symmetry in $x_2$ and $x_3$ directions}

Throughout this work, we consider a stationary distribution function $F = F(x, p)$ that is assumed to be symmetric with respect to the transverse spatial variables $x_2$ and $x_3$. Under this symmetry, the distribution depends only on the longitudinal coordinate $x_1 \in [0,1]$ and momentum $p \in \mathbb{R}^3$, so that we may write $F(x, p) = f(x_1, p)$.

The full spatial domain is formally given as $\Omega = [0,1] \times \Omega_{x_2,x_3}$, where $\Omega_{x_2,x_3}$ is a bounded subset of $\mathbb{R}^2$. However, due to the imposed symmetry in the $x_2$ and $x_3$ directions, the problem effectively reduces to a one-dimensional slab in $x_1$. Note that if $\Omega_{x_2,x_3}$ were unbounded, the total mass of the system would be infinite unless the density is identically zero, which we exclude.

This steady-state configuration is reminiscent of the classical Couette flow in kinetic theory. We thus focus on a boundary value problem in the phase space $(x_1, p) \in [0,1] \times \mathbb{R}^3$, subject to the following inflow boundary conditions at $x_1 = 0$ and $x_1 = 1$:
\begin{equation}\label{boundary}
\begin{cases}
f(0, p) = f_L(p), &\text{for } p_1 > 0, \\
f(1, p) = f_R(p), &\text{for } p_1 < 0,
\end{cases}
\end{equation}
where $f_L$ and $f_R$ are prescribed non-negative functions describing incoming particle distributions at the boundaries. For convenience, we define the combined inflow profile
$$
f_{LR}(p) := f_L(p)\, \mathbf{1}_{p_1 > 0} + f_R(p)\, \mathbf{1}_{p_1 < 0}.
$$
\subsubsection{Boundary conditions}
\label{sec.boundary}

We impose the following assumptions on the inflow boundary profile $f_{LR}$. We first let $0<r_1<r_2<\infty$ and define
\[
Q_{r_1,r_2}\eqdef \{q\in\mathbb{R}^3:\ q_1\ge r_1 \text{ and } r_1\le |q|\le r_2\},
\] and its reflection \[
\bar{Q}_{r_1,r_2}\eqdef \{q\in\mathbb{R}^3:\ q_1\le - r_1 \text{ and } r_1\le |q|\le r_2\}.
\]Assume that $f_L\in L^1(Q_{r_1,r_2})$ and $f_R\in L^1 (\bar{Q}_{r_1,r_2})$. In addition, suppose that there exist a constant $M>0$ such that
\begin{multline}
\label{boundary condition}
f_{LR}\ge 0,  \quad
\int_{Q_{r_1,r_2}} f_L(q)\,dq>0,\quad \int_{\bar{Q}_{r_1,r_2}} f_R(q)\,dq>0,\\
\|f_{LR}\|  \le M, \quad
\|f_{LR}\|_{-1} \le M .
\end{multline}
Here the norms $\|\cdot\|$ and $\|\cdot\|_{-1}$ will be defined in
Section~\ref{sec.norm}.
These assumptions and an additional preliminary lemma (Lemma \ref{lemma m}) will ensure a uniform coercivity of the collision frequency near
the boundary and play a crucial role in the construction of steady solutions.

Formally, the relativistic Boltzmann collision operator $Q(f,f)$ satisfies the
conservation identities
\[
\int_{\mathbb{R}^3} Q(f,f)\,dp
=
\int_{\mathbb{R}^3} p^i Q(f,f)\,dp
=
\int_{\mathbb{R}^3} p^0 Q(f,f)\,dp
=0,
\qquad i=1,2,3.
\]
These identities correspond to the conservation of mass, momentum, and energy,
and lead to the following compatibility conditions on the inflow boundary data
prescribed at $x_1=0$ and $x_1=1$:
\begin{equation}
\label{conlaw}
\int_{\mathbb{R}^3}
\begin{pmatrix}
1 \\[1mm]
p \\[1mm]
p^0
\end{pmatrix}
\hat{p}_1\, f_L(p)\,dp
=
\int_{\mathbb{R}^3}
\begin{pmatrix}
1 \\[1mm]
p \\[1mm]
p^0
\end{pmatrix}
\hat{p}_1\, f_R(p)\,dp .
\end{equation}

\subsection{Solutions in the mild formulation}

We begin by recalling that the relativistic Boltzmann equation \eqref{RBE}
can be rewritten in the form
\begin{equation}
\label{recall}
\partial_{x_1} f + \frac{1}{\hat{p}_1} f \mathcal{L}f
= \frac{1}{\hat{p}_1} Q^+(f,f),
\end{equation}
where the linearized collision operator $\mathcal{L}$ is defined by
\begin{align*}
\mathcal{L}f
:= \frac{1}{p^0}
\int_{\mathbb{R}^3} \frac{dq}{q^0}
\int_{\mathbb{R}^3} \frac{dq'}{q'^0}
\int_{\mathbb{R}^3} \frac{dp'}{p'^0}
\, W(p,q \mid p',q')\, f(x_1,q),
\end{align*}
so that the loss term satisfies $Q^-(f,f)= f \mathcal{L}f$.

For each fixed $p\in\mathbb{R}^3$, equation \eqref{recall} is an ordinary
differential equation with respect to the spatial variable $x_1$.
We therefore introduce the integrating factor
\[
\exp\!\left(
\frac{1}{\hat{p}_1}
\int_{x_{1,0}}^{x_1} \mathcal{L}f(z,p)\,dz
\right),
\]
where $x_{1,0}=0$ if $p_1>0$ and $x_{1,0}=1$ if $p_1<0$.
Using the inflow boundary conditions \eqref{boundary}, we obtain the following
mild formulation of \eqref{recall}:
\begin{equation}
\label{f representation for p1}
\begin{split}
f(x_1,p)
&= \mathbf{1}_{\{p_1>0\}}
\bigg[
f_L(p)\exp\!\left(
-\frac{1}{\hat{p}_1}\int_0^{x_1} \mathcal{L}f(z,p)\,dz
\right) \\
&\quad+ \frac{1}{\hat{p}_1}
\int_0^{x_1}
\exp\!\left(
-\frac{1}{\hat{p}_1}\int_z^{x_1} \mathcal{L}f(z',p)\,dz'
\right)
Q^+(f,f)(z,p)\,dz
\bigg] \\
&\quad+ \mathbf{1}_{\{p_1<0\}}
\bigg[
f_R(p)\exp\!\left(
\frac{1}{\hat{p}_1}\int_{x_1}^{1} \mathcal{L}f(z,p)\,dz
\right) \\
&\quad- \frac{1}{\hat{p}_1}
\int_{x_1}^{1}
\exp\!\left(
\frac{1}{\hat{p}_1}\int_{x_1}^{z} \mathcal{L}f(z',p)\,dz'
\right)
Q^+(f,f)(z,p)\,dz
\bigg] \\
&=: (Af)(x_1,p).
\end{split}
\end{equation}
Here $A$ denotes the solution operator acting on $f\in\mathcal{A}$, where
the solution space $\mathcal{A}$ will be specified in
\eqref{sol space}. This formulation will serve as the starting point for the fixed-point
construction of steady solutions.

%
%
%
%
%
%

\subsection{Notations}

In this section, we introduce the notational conventions for relativistic
four-vectors and the function spaces used throughout the paper.

\begin{itemize}

\item We denote a relativistic four-vector by $p^\mu$, where
$\mu=0,1,2,3$, with components
\[
p^\mu \in \{p^0,p^1,p^2,p^3\}.
\]
Henceforth, four-vectors will simply be referred to as vectors.

\item Latin indices $a,b,i,j,k$ take values in $\{1,2,3\}$, while Greek
indices $\kappa,\lambda,\mu,\nu$ take values in $\{0,1,2,3\}$.
Indices are raised and lowered using the Minkowski metric
$\eta_{\mu\nu}$ and its inverse $(\eta^{-1})^{\mu\nu}$, defined by
\[
\eta_{\mu\nu}=(\eta^{-1})^{\mu\nu}=\mathrm{diag}(-1,1,1,1),
\]
so that $p_\mu=\eta_{\mu\nu}p^\nu$.

\item We adopt Einstein’s summation convention, whereby repeated indices with
one upper and one lower index are summed over.

\item The Lorentz inner product of two four-vectors is given by
\begin{equation}
\label{lorentz.inner.prd}
p^\mu q_\mu = p^\mu \eta_{\mu\nu} q^\nu
= -p^0 q^0 + \sum_{i=1}^3 p^i q^i .
\end{equation}

\item A four-vector $p^\mu$ satisfying the mass-shell condition
$p^\mu p_\mu=-1$ with $p^0>0$ is called an \emph{energy--momentum vector}.
In this case, $p^\mu=(p^0,p)$ with $p\in\mathbb{R}^3$, and
\[
p^0=\sqrt{1+|p|^2}.
\]
Throughout this paper, the vectors $p^\mu,q^\mu,p^{\prime\mu},q^{\prime\mu}$
appearing in the relativistic Boltzmann equation \eqref{RBE} are always
energy--momentum vectors.

\item A four-vector $a^\mu$ is called \emph{space-like} if $a^\mu a_\mu>0$,
and \emph{time-like} if $a^\mu a_\mu<0$.

\end{itemize}

With these conventions, the quantity $s$ appearing in \eqref{W} denotes the
square of the total energy in the center-of-momentum frame,
\begin{equation}
\label{s}
s=s(p^\mu,q^\mu)\eqdef-(p^\mu+q^\mu)(p_\mu+q_\mu)
=2(-p^\mu q_\mu+1)\ge0,
\end{equation}
while $g$ denotes the relative momentum,
\begin{equation}
\label{g}
g=g(p^\mu,q^\mu)\eqdef
\sqrt{(p^\mu-q^\mu)(p_\mu-q_\mu)}
=\sqrt{2(-p^\mu q_\mu-1)}.
\end{equation}
In particular, $s$ and $g$ are related by $s=g^2+4$.
Using \eqref{lorentz.inner.prd}, we also have
\[
g=\sqrt{-(p^0-q^0)^2+|p-q|^2},
\]
where $|p-q|$ denotes the Euclidean distance in $\mathbb{R}^3$.
Unless otherwise stated, we write $g=g(p^\mu,q^\mu)$ and
$s=s(p^\mu,q^\mu)$.

The scattering angle $\theta$ is defined by
\[
\cos\theta=\frac{(p^\mu-q^\mu)(p'_\mu-q'_\mu)}{g^2},
\]
which is well defined; see \cite{GL1996}.
By the collision invariance
\begin{equation}
\label{collision.invariants}
p^\mu+q^\mu=p^{\prime\mu}+q^{\prime\mu}, \qquad \mu=0,1,2,3,
\end{equation}
we have $g(p^\mu,q^\mu)=g(p^{\prime\mu},q^{\prime\mu})$ and
$s(p^\mu,q^\mu)=s(p^{\prime\mu},q^{\prime\mu})$.

Finally, $C>0$ denotes a generic constant whose value may change from line to
line. We write $A\lesssim B$ if $A\le CB$ for some such constant $C$, and
$A\approx B$ if both $A\lesssim B$ and $B\lesssim A$ hold.

\subsubsection{Special norms for solutions}
\label{sec.norm}

\begin{itemize}

\item
For each fixed $x_1 \in [0,1]$ and $\ell \in \mathbb{R}$, we define
\[
\|f(x_1,\cdot)\|_{L^1_{p,\ell}}
\;\eqdef\;
\int_{\mathbb{R}^3} (p^0)^{\ell}\,|f(x_1,p)|\,dp.
\]

\item
Similarly, for each fixed $p \in \mathbb{R}^3$, we define
\[
\|f(\cdot,p)\|_{L^\infty_{x_1}}
\;\eqdef\;
\operatorname*{ess\,sup}_{x_1\in[0,1]} |f(x_1,p)|.
\]

\item
We use the mixed space
\[
f \in L^\infty_{x_1}\!\left([0,1];\, L^1_p\!\left(\mathbb{R}^3,\,(p^0)^{\frac12}\,dp\right)\right),
\]
that is,
\[
\|f\|_{L^\infty_{x_1}L^1_{p,\frac12}}
\;\eqdef\;
\operatorname*{ess\,sup}_{x_1\in[0,1]}
\int_{\mathbb{R}^3}
|f(x_1,p)|\,(p^0)^{\frac12}\,dp
< \infty.
\]

\item
Similarly, we define the mixed space
\[
f \in L^1_p\!\left(\mathbb{R}^3,\,(p^0)^{\frac12}\,dp;\, L^\infty_{x_1}([0,1])\right),
\]
that is,
\[
\|f\|_{L^1_{p,\frac12}L^\infty_{x_1}}
\;\eqdef\;
\int_{\mathbb{R}^3}
\operatorname*{ess\,sup}_{x_1\in[0,1]}
|f(x_1,p)|\,(p^0)^{\frac12}\,dp
< \infty.
\]

\item
For $k \ge 0$, let $\varphi(p) \eqdef e^{k p^0}$ and define
\[
\|f\|
\;\eqdef\;
\int_{\mathbb{R}^3}
\operatorname*{ess\,sup}_{x_1\in[0,1]}
|f(x_1,p)|\,(p^0)^{\frac12}\,\varphi(p)\,dp
< \infty.
\]
Then, in general, the following inequalities hold:
\begin{equation}\label{function space ineq}
\|f\|_{L^\infty_{x_1}L^1_{p,\frac12}}
\;\le\;
\|f\|_{L^1_{p,\frac12}L^\infty_{x_1}}
\;\le\;
\|f\|.
\end{equation}

\item The inverse-Laplacian norm is defined by
\begin{align*}
\|f\|_{-1}
&\eqdef 4\pi
\sup_{a\in\mathbb{R}^3}
(-\Delta_p)^{-1}
\bigl((p^0)^{1/2}\varphi(p)\operatorname*{ess\,sup}_{x_1\in[0,1]}
|f(x_1,p)|\bigr)(a) \\
&=
\sup_{a\in\mathbb{R}^3}
\int_{\mathbb{R}^3}
\frac{(p^0)^{1/2}\varphi(p)}{|p-a|}
\|f(\cdot,p)\|_{L^\infty_{x_1}}\,dp .
\end{align*}

\item Finally, the hyperplane norm is defined by
\[
\|f\|_{\mathrm{hyp}}
\eqdef
\sup_E
\int_E
(p^0)^{1/2}\varphi(p)
\|f(\cdot,p)\|_{L^\infty_{x_1}}\,d\sigma_E(p),
\]
where the supremum is taken over all two-dimensional planes $E\subset\mathbb{R}^3$
passing through the origin and $d\sigma_E$ denotes the Lebesgue
measure on $E$.
\end{itemize}

%
%
%
%
%

\subsection{Center-of-momentum framework}

There are several equivalent ways to perform the Dirac delta integrations
in the collision operator \eqref{Q original}.
In the \emph{center-of-momentum} (or center-of-mass) frame, the gain and loss
terms of the relativistic Boltzmann collision operator admit the following
representations; see \cite{DeGroot,MR2765751}:
\begin{align}
\label{Q center of momentum}
\begin{split}
Q^+(f,h)
&=\int_{\mathbb{R}^3}\int_{\mathbb{S}^2}
v_{\textup{\o}}\,
\sigma(g,\theta)\,
f(p')\,h(q')\,d\omega\,dq,\\
Q^-(f,h)
&=\int_{\mathbb{R}^3}\int_{\mathbb{S}^2}
v_{\textup{\o}}\,
\sigma(g,\theta)\,
f(p)\,h(q)\,d\omega\,dq,
\end{split}
\end{align}
where $\sigma(g,\theta)$ denotes the scattering kernel and
$v_{\textup{\o}}$ is the M{\o}ller velocity, given by
\begin{equation}
\label{mollerV}
v_{\textup{\o}}=v_{\textup{\o}}(p,q)
=\sqrt{\left|\frac{p}{p^0}-\frac{q}{q^0}\right|^2
-\left|\frac{p}{p^0}\times\frac{q}{q^0}\right|^2}
=\frac{g\sqrt{s}}{p^0 q^0}.
\end{equation}

The pre-collisional momentum pair $(p,q)$ and the post-collisional pair
$(p',q')$ are related by
\begin{equation}
\label{p'}
\begin{split}
p'
&=\frac{p+q}{2}
+\frac{g}{2}
\left(
\omega+(\gamma-1)(p+q)\frac{(p+q)\cdot\omega}{|p+q|^2}
\right),\\
q'
&=\frac{p+q}{2}
-\frac{g}{2}
\left(
\omega+(\gamma-1)(p+q)\frac{(p+q)\cdot\omega}{|p+q|^2}
\right),
\end{split}
\end{equation}
where
\[
\gamma=\frac{p^0+q^0}{\sqrt{s}}.
\]
In particular,
\begin{equation}
\label{gamma minus one}
\gamma-1
=\frac{p^0+q^0-\sqrt{s}}{\sqrt{s}}
=\frac{|p+q|^2}{\sqrt{s}\,(p^0+q^0+\sqrt{s})}.
\end{equation}

The post-collisional energies are given by \cite{MR2765751}
\begin{equation}
\label{energypq}
p'^0
=\frac{p^0+q^0}{2}
+\frac{g}{2\sqrt{s}}\,
\omega\cdot(p+q),
\qquad
q'^0
=\frac{p^0+q^0}{2}
-\frac{g}{2\sqrt{s}}\,
\omega\cdot(p+q).
\end{equation}

With this formulation, the linearized collision operator $\mathcal{L}$ can be written as
\begin{equation}
\label{Lf}
\mathcal{L}f
=\int_{\mathbb{R}^3}\int_{\mathbb{S}^2}
v_{\textup{\o}}\,
\sigma(g,\theta)\,
f(q)\,d\omega\,dq.
\end{equation}

We now state our main results.

\subsection{Main results}

We consider the relativistic Boltzmann equation in a slab geometry subject to
non-negative inflow boundary conditions prescribed at $x_1=0$ and $x_1=1$.
We establish the existence and uniqueness of a steady-state solution that is
symmetric with respect to the transverse spatial variables $x_2$ and $x_3$.
The solution is constructed in a weighted $L^1_p L^\infty_{x_1}$ framework,
exhibits exponential decay in the momentum variable $p$, and satisfies an
intrinsic $(-\Delta_p)^{-1}$-type regularity estimate.

\begin{theorem}[Existence, uniqueness, and intrinsic regularity]
\label{thm:existence}
Suppose that the inflow boundary profile $f_{LR}(p)$ satisfies
\eqref{boundary condition}.
Let $\varphi(p)\eqdef e^{k p^0}$ for some constant $k>0$.
If $k$ is sufficiently large, then there exists a unique non-negative steady
state $f$ solving \eqref{recall} in the sense of the mild formulation
\eqref{f representation for p1}, such that the following weighted bounds hold:
\begin{enumerate}
\item
\[
\|f\|\eqdef
\int_{\mathbb{R}^3}
\operatorname*{ess\,sup}_{x_1\in[0,1]}
|f(x_1,p)|
\,(p^0)^{\frac12}\varphi(p)\,dp
<\infty,
\]
\item
\[
\|f\|_{-1}
\eqdef 4\pi
\sup_{a\in\mathbb{R}^3}
(-\Delta_p)^{-1}
\Bigl(
(p^0)^{1/2}\varphi(p)
\operatorname*{ess\,sup}_{x_1\in[0,1]}
|f(x_1,p)|
\Bigr)(a)
<\infty,
\]
that is,
\[
\|f\|_{-1}
=
\sup_{a\in\mathbb{R}^3}
\int_{\mathbb{R}^3}
\frac{(p^0)^{1/2}\varphi(p)}{|p-a|}
\,
\|f(\cdot,p)\|_{L^\infty_{x_1}}
\,dp
<\infty.
\]
\end{enumerate}
\end{theorem}
In addition, under a geometric integrability assumption on the inflow profile,
we establish a hyperplane integrability estimate for the steady state in
momentum space.  One of the main contributions of the present work is precisely
this hyperplane estimate, which appears to be genuinely new in the relativistic
setting.  The result provides a form of codimension--one control that goes
beyond the standard weighted $L^1_p$ framework.

\begin{theorem}[Hyperplane integrability of the steady state]
\label{thm:hyperplane}
Let $f$ be the steady-state solution constructed in
Theorem~\ref{thm:existence}.
Assume in addition that the inflow boundary profile satisfies the hyperplane
bound $\|f_{LR}\|_{\mathrm{hyp}}<\infty$.
Then the steady state $f$ satisfies the uniform hyperplane integrability
estimate
\[
\|f\|_{\mathrm{hyp}}
\eqdef
\sup_E
\int_E
(p^0)^{1/2}\varphi(p)
\,
\|f(\cdot,p)\|_{L^\infty_{x_1}}
\,d\sigma_E(p)
<\infty,
\]
where the supremum is taken over all two-dimensional planes
$E\subset\mathbb{R}^3$ passing through the origin, and $d\sigma_E$ denotes the
Lebesgue measure on $E$.
\end{theorem}

\begin{remark}
The hyperplane integrability estimate in
Theorem~\ref{thm:hyperplane} provides a form of codimension--one control in
momentum space that is strictly stronger than the weighted $L^1_p$ bound in
Theorem~\ref{thm:existence}.  To the best of our knowledge, such a hyperplane
estimate has not been previously established for steady solutions of the
relativistic Boltzmann equation.  The proof relies on a Lorentz--geometric
reduction that exploits the invariance structure of the relativistic collision
operator and yields the estimate as a genuinely \emph{a posteriori} regularity
property of the steady solution.

Such codimension--one bounds are naturally adapted to the nonlocal structure of
the collision operator and may play a role in further qualitative analyses of
the steady state, including stability questions and limiting procedures.
\end{remark}

\subsection{Main difficulties and strategies}

The main difficulty of the present work is not merely the treatment of the
relativistic gain term, but the construction of a stationary solution theory on a
slab of fixed size,
\(
x_1\in[0,1],
\)
under non-perturbative inflow boundary conditions. In contrast to thin-slab
settings, no small geometric parameter is available here. Consequently, the
attenuation in the mild formulation cannot be produced by the spatial size of the
domain itself and must instead be generated intrinsically through the collision
frequency.

This point becomes transparent once the equation is written in the mild form.
Indeed, the solution operator involves the exponential factor
\[
1_{p_1>0}\exp\!\left(-\frac{1}{\hat p_1}\int \mathcal{L}f(z,p)\,dz\right),
\]
whose upper bound depends on a coercive lower bound for \(\mathcal{L}f\). This is highly
nontrivial for two reasons. First, the transport coefficient \(1/\hat p_1\) becomes
singular near grazing momentum \(p_1=0\). Second, since we do not work in a
perturbative regime around a Maxwellian equilibrium, the required lower bound
for \(\mathcal{L}f\) cannot be imported from a linearized spectral theory. It must be created
directly from the structure of the inflow problem itself.

For this reason, the boundary condition \eqref{boundary condition} is not merely an admissibility
assumption ensuring integrability and decay of the incoming data. Its deeper role
is to encode a genuine non-degeneracy of particles entering the slab from both
sides. This non-degeneracy is made quantitative in the new preliminary Lemma
1.6, where we prove a uniform positive lower bound for a convolution-type
integral generated by the boundary profiles \(f_L\) and \(f_R\). In particular, the
lemma shows that the incoming mass concentrated on the sets \(Q_{r_1,r_2}\) and
\(\overline Q_{r_1,r_2}\) produces a strictly positive contribution uniformly for all
\(p\in\mathbb R^3\). This is the key mechanism through which the boundary data
generate coercivity in the interior.

This lower-bound mechanism is then inserted into Proposition 2.1, where we
establish the two-sided estimate
\[
C_\ell (p^0)^{1/2}\le \mathcal{L}f(x_1,p)\le C_u (p^0)^{1/2}.
\]
The lower bound is the decisive part. It shows that the loss operator has the same
momentum growth as in the relativistic equilibrium setting, but here it is obtained
without any proximity to equilibrium. Rather, it is created from the inflow
structure together with the mild lower bound inherited from the transport
representation. In this sense, one of the main novelties of the analysis is that the
coercivity of the collision frequency is not postulated a priori and does not come
from a background Maxwellian; it is produced by the boundary inflow itself.

A second major difficulty is that the operator \(L\) enters the mild formulation
nonlinearly through the exponential attenuation factor. Therefore, to carry out the
Banach fixed-point argument, it is not enough to know that \(\mathcal{L}f\) is coercive. One
must also show that \(\mathcal{L}f\) depends stably on \(f\) in a sufficiently strong weighted
topology. This is achieved in Lemma 2.2, where we prove
\[
|\mathcal{L}f(x_1,p)-\mathcal{L}f(x_1,p)|
\lesssim e^{-\sqrt{k}/2}(p^0)^{1/2}\|f-h\|.
\]
The importance of this estimate is twofold. First, it provides quantitative control
of the variation of the nonlinear attenuation factor. Second, the exponentially
small prefactor \(e^{-\sqrt{k}/2}\) is what ultimately allows the solution operator to
become contractive for large \(k\). Thus the loss term plays a double role in the
proof: its coercivity stabilizes transport, while its continuity stabilizes the
nonlinear dependence of the mild formulation on the unknown solution.

The next difficulty concerns the relativistic gain operator \(Q^+\). Here the main
obstruction is not simply nonlocality, but the highly anisotropic post-collisional
geometry induced by the center-of-momentum representation, the Møller velocity,
and the relativistic transformation law for \((p',q')\). Because of this structure,
standard convolution arguments from the Newtonian theory do not directly apply.
To overcome this, we do not attempt to control \(Q^+\) in a single norm. Instead,
we build a mixed weighted framework in which different aspects of the operator
are handled by different but complementary estimates.

More precisely, we construct the solution space \(A\) using the weighted norm
\[
\|f\|
=
\int_{\mathbb R^3}(p^0)^{1/2}\phi(p)\,
\|f(\cdot,p)\|_{L^\infty_{x_1}}\,dp
\]
together with the inverse-Laplacian type quantity
\[
\|f\|_{-1}
=
\sup_{a\in\mathbb R^3}
\int_{\mathbb R^3}
\frac{(p^0)^{1/2}\phi(p)}{|p-a|}
\|f(\cdot,p)\|_{L^\infty_{x_1}}\,dp.
\]
The latter is not an auxiliary technical refinement. It is part of the core analytic
architecture of the proof. The reason is that the post-collisional geometry of
\(Q^+\) naturally generates kernel structures of potential type in momentum space (see \eqref{Ibound1}),
and these cannot be closed at the level of the weighted \(L^1\)-norm alone.

This is reflected in Proposition 3.1, where we prove the potential-type estimate
\[
\sup_{a\in\mathbb R^3}
\int_{\mathbb R^3}
\frac{\phi(p)}{|p-a|}
\|Q^+(f,f)(\cdot,p)\|_{L^\infty_{x_1}}\,dp
\lesssim \|f\|^2.
\]
This estimate reveals an intrinsic \((-\Delta_p)^{-1}\)-structure hidden in the
relativistic collision geometry. It is precisely this structure that motivates the norm
\(\|\cdot\|_{-1}\) in the first place and allows one to propagate inverse-Laplacian
regularity through the nonlinear iteration.

At the same time, the fixed-point argument also requires a pointwise weighted
bound on the gain term itself. This is obtained in Lemma 3.2:
\[
\int_{\mathbb R^3}\phi(p)\,
\|Q^+(f,f)(\cdot,p)\|_{L^\infty_{x_1}}\,dp
\lesssim k^{-1}\|f\|\,\|f\|_{-1}.
\]
The factor \(k^{-1}\) is essential here. Without such a small coefficient, the
nonlinear gain contribution would not be dominated by the damping created by the
loss term, and the self-map and contraction properties of the solution operator
would not close. Thus the gain analysis is not merely quantitative refinement: it is
the mechanism that balances the nonlinear production term against the coercive
transport attenuation.

A further contribution of the paper concerns codimension-one regularity in
momentum space (Theorem \ref{thm:hyperplane}). After the existence and uniqueness theory is completed, we
show that hyperplane integrability propagates from the inflow boundary to the
interior steady solution. This part of the analysis is logically separate from the
fixed-point construction. In particular, no a priori hyperplane norm is imposed in
the definition of the solution space \(A\). Instead, the hyperplane bound is derived
after the steady state has been constructed, as a genuinely a posteriori regularity
property.

The proof of this hyperplane estimate relies on a geometric reduction based on
Lorentz transformations. By rotating an arbitrary two-dimensional momentum
hyperplane into a canonical configuration, we reduce the estimate to a normalized
geometric setting in which the relativistic collision structure can be exploited more
effectively. This Lorentz-based reduction is therefore not just a geometric
convenience; it is the key device that makes uniform codimension-one integration
possible across all hyperplanes.

In summary, the proof is organized around three interconnected mechanisms.
First, the non-degenerate inflow boundary data generate a uniform lower bound for
the collision frequency through Lemma 1.6 and Proposition 2.1. Second, the mixed
weighted space \(A\), incorporating both the weighted \(L^1\)-norm and the
inverse-Laplacian norm, provides the correct setting for controlling the nonlinear
gain operator. Third, a Lorentz-geometric reduction yields the propagation of
hyperplane integrability as an additional a posteriori property of the steady state.
Together, these ingredients provide a non-perturbative framework for constructing
steady relativistic Boltzmann solutions on a full slab domain.

\subsection{Further prior results in relativistic collisional kinetic theory}

This section briefly reviews prior developments in relativistic collisional
kinetic theory and related areas.

We begin with classical works on relativistic hydrodynamics, including the
foundational contributions of Synge, Israel, Stewart, Landau, and others
\cite{synge1958relativistic,israel1963relativistic,israel1976thermodynamics,
israel1979transient,stewart1977transient,van1987chapman,PhysRev.58.269,
landau1987fluid,ruggeri1986relativistic,DeGroot,MR1898707}.
The study of global-in-time existence for the relativistic Boltzmann equation
was initiated by Dudyński and Ekiel--Jeżewska in 1988--89 through the analysis of
the linearized equation \cite{D-E3,D}, and was later extended to the nonlinear
setting by Glassey and Strauss in a series of works during the mid-1990s
\cite{MR1211782,GS4}.
For the relativistic analogue of the near-vacuum regime, we refer to
\cite{GL-Vacuum,MR2982812}.

The stability theory of the relativistic Maxwellian (J\"uttner) distribution
has been investigated for both hard and soft potentials in
\cite{MR2543323,MR1211782,MR2728733}, and was recently extended to the
non–angular-cutoff case in \cite{MR4470411}.
The Newtonian limit of the relativistic Boltzmann equation has been studied in
\cite{Cal,MR2679588}, while results on hydrodynamic limits can be found in
\cite{MR2793935}.
Blow-up phenomena for the relativistic Boltzmann equation without the loss term
were examined in \cite{MR2102321}.
Additional results on the Cauchy problem are available in
\cite{D-E0,D-E0er,D-E2,MR2378164}.
The spatially homogeneous quantum relativistic equation and its inhomogeneous
counterpart were studied in \cite{E-M-V} and \cite{MR4264953}, respectively.
Regularizing effects of the relativistic collision operator have also been
addressed in \cite{MR1402446,MR3880739}.

For the relativistic Vlasov--Maxwell--Landau system with self-consistent
electromagnetic fields, Guo and Strain established global existence and
stability near relativistic Maxwellian equilibria for the relativistic Landau
equation in \cite{Guo-Strain3}, and later extended their analysis to the
relativistic Vlasov--Maxwell--Boltzmann system in \cite{Guo-Strain2}.
More recently, local well-posedness \cite{doi:10.1137/23M1608938} and asymptotic
stability of the J\"uttner distribution \cite{2401.00554} have been proved for
the relativistic Vlasov--Maxwell--Landau system in bounded domains.
A detailed treatment of the relativistic collision operator in
center-of-momentum coordinates can be found in
\cite{MR2765751,MR4268835}.

Regarding the spatially homogeneous relativistic Boltzmann equation, various
moment estimates were obtained in
\cite{MR3166961,MR4271957,MR4156121}.
Entropy dissipation and global existence of weak solutions for the spatially
homogeneous relativistic Landau equation were studied in \cite{StrainTas},
together with refined moment propagation results.
Conditional singularity formation for weak solutions was investigated in
\cite{1903.05301}.
We also mention studies of Cauchy problems arising in cosmological models in
\cite{MR4222138,MR3620020,MR3169776}.

Another line of research concerns relaxation-time approximations of the
relativistic Boltzmann equation.
The Marle-type relativistic BGK model has been studied in
\cite{MR2988960,MR3300786,1801.08382}, while the Anderson--Witting model is
discussed in \cite{1811.10023}.
A recently proposed relativistic BGK model was introduced in \cite{Pennisi_2018},
and its existence theory was developed in \cite{H-R-Yun}.

For further general discussions on relativistic kinetic equations, we refer to
\cite{C,C-I-P,GL1996,Vil02,MR1898707,MR3189734}.
\subsection{Geometric and analytic preliminaries}\label{prelim}

In this subsection, we collect several analytic and geometric tools that will be
used repeatedly throughout the paper, particularly in the estimates of the
collision operator and in the derivation of hyperplane bounds.
We begin with a classical coercive inequality for the relativistic relative
momentum in the center-of-momentum framework.

\begin{lemma}[Coercive inequality for the relative momentum]
\label{coersive inequality}
The following coercive inequality for the relativistic relative momentum $g$
is taken from \cite[Lemma~3.1]{MR1211782}.
It holds that 
\begin{equation}\label{gINEQ}
\frac{\bigl(|p-q|^2+|p\times q|^2\bigr)^{1/2}}{\sqrt{p^0 q^0}}
\le g(p^\mu,q^\mu)
\le \min\{|p-q|,\,2\sqrt{p^0q^0}\}.
\end{equation}
\end{lemma}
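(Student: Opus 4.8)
The plan is to derive everything from the two equivalent expressions for \(g^2\) already recorded in the excerpt: \(g^2 = 2(-p^\mu q_\mu - 1) = 2(p^0 q^0 - p\cdot q - 1)\) from \eqref{g} and \eqref{lorentz.inner.prd}, and \(g^2 = |p-q|^2 - (p^0-q^0)^2\) (the Euclidean form stated just after \eqref{g}). These, together with the mass-shell relations \(|p|^2 = (p^0)^2 - 1\), \(|q|^2 = (q^0)^2 - 1\) and the Lagrange identity \(|p\times q|^2 = |p|^2|q|^2 - (p\cdot q)^2\), are the only inputs. No change of frame is needed; in fact, since \(|p-q|\), \(|p\times q|\) and \(p^0 q^0\) are not Lorentz invariant (whereas \(g\) is), a reduction to the center-of-momentum frame would not by itself close the argument, so I would argue by direct computation.

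The two upper bounds are immediate. From \(g^2 = |p-q|^2 - (p^0-q^0)^2 \le |p-q|^2\) one gets \(g \le |p-q|\). For the second, Cauchy--Schwarz gives \(-p\cdot q \le |p|\,|q| \le p^0 q^0\), hence \(-p\cdot q - 1 \le p^0 q^0\) and therefore \(g^2 = 2(p^0 q^0 - p\cdot q - 1) \le 4\, p^0 q^0\), i.e. \(g \le 2\sqrt{p^0 q^0}\). Together these yield the right-hand inequality \(g \le \min\{|p-q|,\,2\sqrt{p^0 q^0}\}\).

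The core of the proof is the lower bound, and the one computational fact I would isolate is the algebraic identity
\[
p^0 q^0\, g^2 \;=\; |p-q|^2 + |p\times q|^2 + \frac{g^4}{4}.
\]
To verify it, write \(A := p^0 q^0\), \(B := p\cdot q\), so that \(g^2 = 2(A-B-1)\) and \(g^4/4 = (A-B-1)^2\); expand \(|p-q|^2 = (p^0)^2 + (q^0)^2 - 2 - 2B\) and \(|p\times q|^2 = \bigl((p^0)^2-1\bigr)\bigl((q^0)^2-1\bigr) - B^2\) using the mass-shell relations, and collect terms — both sides then reduce to \(2A^2 - 2AB - 2A\). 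Since \(g^4/4 \ge 0\), the identity gives \(p^0 q^0\, g^2 \ge |p-q|^2 + |p\times q|^2\); dividing by \(p^0 q^0 > 0\) and taking square roots yields exactly the claimed left-hand inequality.

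I do not expect a genuine obstacle: once the identity above is in hand the argument is entirely elementary. The only step that takes a moment of thought is guessing the correct remainder term \(g^4/4\) — equivalently, recognizing that \(g^2 s/4 = g^2 + g^4/4\) is the natural quantity that appears, using \(s = g^2 + 4\); the rest is bookkeeping with the mass-shell constraints and a single application of Cauchy--Schwarz.
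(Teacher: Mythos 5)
Your proposal is correct. The paper does not prove this lemma itself --- it is quoted from Glassey--Strauss \cite[Lemma~3.1]{GS3} (with the factor-of-two normalization noted in the accompanying remark) --- and your argument is the same elementary computation one finds there: the two upper bounds follow directly from $g^2=|p-q|^2-(p^0-q^0)^2$ and Cauchy--Schwarz, and your key identity $p^0q^0\,g^2=|p-q|^2+|p\times q|^2+\tfrac{g^4}{4}$ is verified (with $A=p^0q^0$, $B=p\cdot q$, both sides reduce to $2A^2-2AB-2A$), which immediately gives the lower bound after discarding the nonnegative remainder $\tfrac{g^4}{4}$.
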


\begin{remark}
In \cite{MR1211782}, the relative momentum is defined as one half of our definition
\eqref{g}.
\end{remark}

We next establish a uniform positive lower bound for the convolution-type
integrals associated with the boundary profiles $f_L$ and $f_R$.
The assumptions required here are weaker than those imposed in
\eqref{boundary condition}.
\begin{lemma}\label{lemma m}
Let $0<r_1<r_2<\infty$ and define
\[
Q_{r_1,r_2}\eqdef \{q\in\mathbb{R}^3:\ q_1\ge r_1 \text{ and } r_1\le |q|\le r_2\},
\] and its reflection \[
\bar{Q}_{r_1,r_2}\eqdef \{q\in\mathbb{R}^3:\ q_1\le -r_1 \text{ and } r_1\le |q|\le r_2\}.
\]Suppose $f_L$ satisfies
\[
f_L\ge 0 \quad \text{a.e. on }Q_{r_1,r_2},
\qquad 
\int_{Q_{r_1,r_2}} f_L(q)\,dq>0.
\] Assume similarly for $f_R$ the following on $\bar{Q}_{r_1,r_2}$
\[
f_R\ge 0 \quad \text{a.e. on }\bar{Q}_{r_1,r_2},
\qquad 
\int_{\bar{Q}_{r_1,r_2}} f_R(q)\,dq>0.
\]
Define
\[
I(p)\eqdef \int_{Q_{r_1,r_2}} c_0\frac{|p-q|^2}{(p^0q^0)^2}f_L(q)\,dq.
\]
Then there exists a constant $m>0$ such that
\[
I(p)\ge m
\qquad \text{for all }p\in\mathbb{R}^3.
\] The same estimate holds also for the integral $\bar{I}(p)$ with respect to the integrand $f_R$ which is defined as
\[
\bar{I}(p)\eqdef \int_{\bar{Q}_{r_1,r_2}} c_0\frac{|p-q|^2}{(p^0q^0)^2}f_R(q)\,dq.
\]
\end{lemma}

\begin{proof}For the sake of brevity, we prove the lower bound estimate for $I(p)$ only. The proof for $\bar{I}(p)$ is the same.  
Let $Q\eqdef Q_{r_1,r_2}$. Since $q\in Q$ implies $r_1\le |q|\le r_2$, we have
\[
1+r_1^2 \le (q^0)^2 = 1+|q|^2 \le 1+r_2^2
\qquad \text{for all }q\in Q.
\]
In particular, $q^0$ is bounded above and below by positive constants on $Q$.

We first prove that $I(p)$ is positive.
Fix $p\in\mathbb{R}^3$. Since $f_L\ge0$, the integrand is nonnegative and thus $I(p)\ge0$. 
If $I(p)=0$, then
\[
|p-q|^2 f_L(q)=0 \qquad \text{for a.e. }q\in Q.
\]
Hence $f_L(q)\neq0$ only when $q=p$. Since $\{p\}$ has Lebesgue measure zero, this contradicts
$\int_Q f_L(q)\,dq>0$. Therefore $I(p)>0$ for every $p\in\mathbb{R}^3$.

Now, we prove the continuity of $I(p)$.
Let $p_n\to p$. Since $Q$ is bounded, there exists $C>0$ such that
\[
0\le c_0\frac{|p_n-q|^2}{(p_n^0q^0)^2}f_L(q)\le C f_L(q)
\qquad \text{for }q\in Q.
\]
Because $f_L\in L^1(Q)$, dominated convergence yields
$
I(p_n)\to I(p).
$
Hence $I$ is continuous on $\mathbb{R}^3$.

Now, we study the behavior as $|p|\to\infty$.
For $q\in Q$,
\[
\frac{|p-q|^2}{(p^0q^0)^2}
=
\frac{|p|^2-2p\cdot q+|q|^2}{(1+|p|^2)(q^0)^2}.
\]
Since $Q$ is bounded, the terms involving $q$ vanish uniformly as $|p|\to\infty$, and thus
\[
\frac{|p-q|^2}{(p^0q^0)^2}\longrightarrow \frac{1}{(q^0)^2}
\qquad \text{uniformly for }q\in Q.
\]
Consequently,
\[
I(p)\longrightarrow 
L \eqdef c_0\int_Q \frac{f_L(q)}{(q^0)^2}\,dq
\qquad \text{as }|p|\to\infty.
\]
Since $f_L\ge0$ and $\int_Q f_L(q)\,dq>0$, we have $L>0$. Thus there exists $R>0$ such that
\[
|p|\ge R \implies I(p)\ge \frac{L}{2}.
\]

Finally, we make a uniform lower bound. 
On the compact set $\{|p|\le R\}$, the function $I$ is continuous and strictly positive.
Hence
\[
m_1\eqdef \min_{|p|\le R} I(p) >0.
\]
Let
$
m\eqdef \min\left\{m_1,\frac{L}{2}\right\}>0.
$ 
Then $I(p)\ge m$ for all $p\in\mathbb{R}^3$.
\end{proof}  
We now recall several elementary facts concerning Lorentz transformations
that will be used in the analysis of the collision operator and in the
geometric reduction underlying the hyperplane estimates.

A $4\times4$ real matrix $\Lambda=(\Lambda^\mu_{\ \nu})$ is called a (proper)
Lorentz transformation if $\det\Lambda=1$ and
\begin{equation}\label{lorentz.trans.inv}
\Lambda^\kappa_{\ \mu}\,\eta_{\kappa\lambda}\,\Lambda^\lambda_{\ \nu}
=\eta_{\mu\nu},
\qquad \mu,\nu=0,1,2,3.
\end{equation}
In particular, Lorentz transformations preserve the Lorentz inner product:
\begin{equation}\label{invariance}
p^\mu q_\mu
=
(\Lambda p)^\mu(\Lambda q)_\mu .
\end{equation}
Any Lorentz transformation is invertible, and its inverse is again a Lorentz
transformation. We refer to
\cite{MR2707256,MR2728733,MR2679588,MR2793935}
for further background.

A key geometric ingredient in our analysis is the ability to normalize the
orientation of momentum-space hyperplanes via Lorentz transformations. 
Let $a=(a_1,a_2,a_3)\in\mathbb{R}^3$. There exists a Lorentz transformation
$\Lambda$, given by a spatial rotation, such that
\begin{equation}\label{rot.lorentz}
\Lambda (0,a_1,a_2,a_3)=(0,0,0,|a|),
\end{equation}
where $|a|=(a_1^2+a_2^2+a_3^2)^{1/2}$.
In particular, $\Lambda$ preserves the time component and acts as a rotation on
the spatial momentum variables.

This geometric reduction allows us to rotate arbitrary two-dimensional
hyperplanes in momentum space into a canonical configuration.
It plays a crucial role in the proof of the hyperplane estimate
(Proposition~\ref{prop.3.4}) and in its propagation to the steady-state
solution established in Section~\ref{sec.6}.

\subsection{Outline of the article}

The remainder of this article is organized as follows.
In Section~\ref{sec.L}, we establish quantitative estimates for the loss term operator
$\mathcal{L}$, including uniform upper and lower bounds and continuity-type estimates.
These results provide the coercivity and stability properties of the collision
frequency that will be repeatedly used in the fixed-point framework.

Section~\ref{egain} is devoted to estimates for the gain term $Q^+$.
We derive two complementary bounds, a potential-type convolution estimate and a
pointwise weighted estimate, which together
allow us to control the nonlinear structure of the relativistic collision
operator in the weighted setting.

In Section~\ref{sec.main}, we introduce the solution space $\mathcal{A}$ and define the
solution operator $A$ through the mild formulation of the stationary problem.
We then prove the invariance (self-map) property $A\mathcal{A}\subset
\mathcal{A}$, which supplies the a priori bounds needed for the Banach
fixed-point argument.

In Section~\ref{sec.5}, we establish the contraction property of $A$ on $\mathcal{A}$
and conclude the existence and uniqueness of a non-negative stationary mild
solution, thereby proving Theorem~\ref{thm:existence}.

Finally, Section~\ref{sec.6} proves an additional \emph{a posteriori} regularity statement (Theorem \ref{thm:hyperplane}). We first establish an additional uniform hyperplane estimate on the gain term $Q^+.$ 
Assuming the inflow boundary profile satisfies a uniform hyperplane integrability
bound, we show that the corresponding stationary solution enjoys the same
hyperplane estimate. This propagation result is independent of the existence and
uniqueness theory and provides further structural insight into steady solutions.

%
%
%
%
%

\section{Coercivity and Continuity Estimates for the Loss Operator}
\label{sec.L}In the following proposition, we first establish a uniform estimate for the loss term operator $\mathcal{L}$; the following is a generalized version of Proposition 1. (i) of \cite[page 8]{2411.06533} (special case for the following theorem in the case that $f=J_\infty$ for the relativistic J\"uttner equilibrium $J_\infty(p)\eqdef \frac{1}{4\pi cT_\infty K_2(c^2/T_\infty)}e^{-\frac{p_0u_{\infty,0}-p_1u_{\infty,1}}{T_\infty}}$).
\begin{proposition}[Upper- and lower-bound estimates for $\mathcal{L}f$]\label{lowerL} Let a given boundary profile $f_{LR}$ satisfy \eqref{boundary condition}.  Let $f\in L^\infty_{x_1}\left([0,1]; L^1_p\!\left(
\mathbb{R}^3,\,(p^0)^{\frac12}\,dp\,
\right)\right)$ satisfy \begin{equation}
    \label{lower assumption}\mathcal{L}f\ge c_1\textup{ and }\|f\|_{L^\infty_{x_1}L^1_{p,\frac{1}{2}}}\le M,
\end{equation} for some constant $c_1$ and $M$.  Furthermore, suppose that $f$ satisfies 
\begin{equation}
    \label{f coercivity assumption} 
        f(x_1,p)\ge 1_{p_1>0}f_L(p)\exp\left(-\frac{c_1 x_1}{\hat{p}_1}\right)
        +1_{p_1<0}f_R(p)\exp\left(\frac{(1-x_1)c_1}{\hat{p}_1}\right).
    \end{equation}
Then there exist positive constants $C_{\ell}$ and $C_u$ that depend on $c_1$, $M$ and  $f_{LR}$ such that 
$$
C_{\ell} (\pZ)^{1/2} \leq 
(\mathcal{L}f)(x_1,p)=\int_\rth\int_{\mathbb{S}^2}v_{\textup{\o}} \sigma(g,\theta)f(x_1,q)d\omega dq
\leq C_u (\pZ)^{1/2}.
$$
\end{proposition}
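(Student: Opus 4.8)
The plan is to prove the two inequalities by rather different means: the upper bound is a pointwise estimate on the collision kernel combined with the moment hypothesis $\|f\|_{L^\infty_{x_1}L^1_{p,1/2}}\le M$, while the lower bound must genuinely exploit the pointwise coercivity \eqref{f coercivity assumption}. I expect essentially all of the work to be in the lower bound.

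For the upper bound, write $v_{\textup{\o}}\sigma\approx v_{\textup{\o}}\,g\,\sigma_0(\theta)=\tfrac{g^2\sqrt s}{p^0q^0}\sigma_0(\theta)$ using \eqref{kernel condition} and \eqref{mollerV}. Lemma~\ref{coersive inequality} gives $g\le 2\sqrt{p^0q^0}$, and since $s=g^2+4\le 4p^0q^0+4\le 8\,p^0q^0$ (using $p^0q^0\ge 1$) we obtain $g^2\sqrt s\lesssim (p^0q^0)^{3/2}$, hence $v_{\textup{\o}}\sigma\lesssim (p^0)^{1/2}(q^0)^{1/2}\sigma_0(\theta)$. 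Integrating in $(q,\omega)$, using $\int_{\mathbb S^2}\sigma_0\,d\omega=c_0\le 1$ and the moment bound,
\[
Lf(x_1,p)\lesssim (p^0)^{1/2}c_0\int_{\mathbb R^3}(q^0)^{1/2}f(x_1,q)\,dq=c_0(p^0)^{1/2}\|f(x_1,\cdot)\|_{L^1_{p,1/2}}\le C_u(p^0)^{1/2}.
\]

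For the lower bound I would first dispose of bounded momenta: when $|p|\le 2R_0$ the weight $(p^0)^{1/2}$ is bounded, so the hypothesis $Lf\ge c_1$ in \eqref{lower assumption} already gives $Lf(x_1,p)\ge c_1\gtrsim_{R_0}(p^0)^{1/2}$. For $|p|>2R_0$ (with $R_0\ge 1$ a fixed radius to be chosen), keep only the part of the $q$-integral over the fixed ball $K=\{|q|\le R_0\}$. On $K$, Lemma~\ref{coersive inequality} gives $g^2\ge \tfrac{|p-q|^2+|p\times q|^2}{p^0q^0}\ge \tfrac{(|p|-|q|)^2}{p^0q^0}$, and since $|p|-|q|\ge |p|/2\gtrsim p^0$ and $q^0\le(1+R_0^2)^{1/2}$ we get $g^2\gtrsim_{R_0}p^0$; therefore $\sqrt s=\sqrt{g^2+4}\ge g\gtrsim_{R_0}(p^0)^{1/2}$ and $v_{\textup{\o}}\sigma\approx\tfrac{g^2\sqrt s}{p^0q^0}\sigma_0(\theta)\gtrsim_{R_0}(p^0)^{1/2}\sigma_0(\theta)$ on $K$. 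Integrating over $K\times\mathbb S^2$ and again using $\int_{\mathbb S^2}\sigma_0=c_0$,
\[
Lf(x_1,p)\gtrsim_{R_0}(p^0)^{1/2}\,c_0\int_K f(x_1,q)\,dq ,
\]
so everything reduces to the $x_1$-uniform mass bound $\inf_{x_1\in[0,1]}\int_K f(x_1,q)\,dq\gtrsim 1$.

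To obtain this I would invoke \eqref{f coercivity assumption}. Since $x_1,1-x_1\in[0,1]$, the damping exponents satisfy $-\tfrac{c_1x_1}{\hat q_1}\ge-\tfrac{c_1}{\hat q_1}$ for $q_1>0$ and $\tfrac{(1-x_1)c_1}{\hat q_1}\ge-\tfrac{c_1}{|\hat q_1|}$ for $q_1<0$, so for every $x_1$,
\[
f(x_1,q)\ge \mathbf 1_{q_1>0}f_L(q)e^{-c_1q^0/q_1}+\mathbf 1_{q_1<0}f_R(q)e^{-c_1q^0/|q_1|}.
\]
It then suffices that the inflow data carries a definite amount of mass inside $K$: evaluating $Lf_L$ and $Lf_R$ at $p=0$ (where $v_{\textup{\o}}(0,q)\le 2$ and $g(0,q)=\sqrt{2(q^0-1)}$) and using \eqref{boundary condition} yields $\int_{q_1>0}(q^0)^{1/2}f_L\,dq\gtrsim m$ and $\int_{q_1<0}(q^0)^{1/2}f_R\,dq\gtrsim m$, while the exponential moment $\|f_{LR}\|\le M$ confines all but $O(e^{-kR_0}M)$ of this mass to $\{|q|\le R_0\}$, so for $R_0$ large a fixed fraction of it lies in $K$. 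The one genuinely delicate point — and, I expect, the technical heart of the argument — is that the weight $e^{-c_1q^0/|q_1|}$ degenerates on the grazing set $\{\hat q_1\approx 0\}$, so one must rule out the confined inflow mass concentrating near $\{q_1=0\}$. I would handle this by splitting $K$ into $\{|\hat q_1|\ge\varepsilon\}$, where $e^{-c_1q^0/|q_1|}\ge e^{-c_1/\varepsilon}$ causes no loss, and the thin grazing slab $\{|\hat q_1|<\varepsilon\}\cap K$, whose inflow mass must be shown small uniformly as $\varepsilon\to0$ using the coercivity of $Lf_L,Lf_R$ together with the moment bounds $\|f_{LR}\|,\|f_{LR}\|_{-1}\le M$; once that is established one fixes $\varepsilon$ and $R_0$, collects constants, and reads off the asserted dependence of $C_\ell$ on $c_1,M,f_{LR}$. (Note that in the degenerate case $c_1=0$ the weight is trivial, \eqref{f coercivity assumption} reads $f\ge f_{LR}$, and this last difficulty is absent.)
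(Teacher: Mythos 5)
Your upper bound is exactly the paper's argument, and the overall skeleton of your lower bound (restrict $q$ to a fixed ball, use the coercive inequality to produce the factor $(p^0)^{1/2}$ for large $|p|$, use $Lf\ge c_1$ for bounded $|p|$) is a legitimate alternative to the paper's route; the paper instead uses $|p-q|\ge p^0-q^0$ to split $Lf\gtrsim c_0\bar c\,(p^0)^{1/2}-c_0M$ and then absorbs the constant $-c_0M$ by averaging with the hypothesis $Lf\ge c_1$ from \eqref{lower assumption}. The problem is the step you yourself flag as the ``technical heart'': you propose to show that the inflow mass in the grazing slab $\{|\hat q_1|<\varepsilon\}\cap K$ is small, uniformly as $\varepsilon\to0$, using only $\inf Lf_L,\inf Lf_R\ge m$ and $\|f_{LR}\|,\|f_{LR}\|_{-1}\le M$. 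This cannot be done: take $f_L$ of fixed total mass concentrated in a thin slab $\{0<q_1<\delta\}$ (e.g.\ $f_L=\delta^{-1}\mathbf 1_{0<q_1<\delta}\,g(q_2,q_3)$ with $g$ smooth and compactly supported). Then $Lf_L\ge m$ and both norms in \eqref{boundary condition} are bounded uniformly in $\delta$ (the kernel $|p-a|^{-1}$ in $\|\cdot\|_{-1}$ is singular only at a point, so it does not penalize concentration along the plane $q_1=0$), yet the entire inflow mass sits in the grazing slab for $\delta<\varepsilon$, and the weight $e^{-c_1q^0/|q_1|}$ there is arbitrarily small. So no lower bound on $\int_K f(x_1,q)\,dq$ depending only on $m,M,c_1,k$ exists, and the argument as proposed does not close.

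The gap is repairable, but only by abandoning the uniformity you aim for and letting the constant depend on the full profile $f_{LR}$ — which is all the proposition claims, and is exactly what the paper does. Since $\inf_p Lf_L(p)\ge m>0$ forces $f_L$ (and likewise $f_R$) to be positive on a set of positive measure, the weighted integral
\[
\int_{|q|\le R_0}\Bigl(\mathbf 1_{q_1>0}f_L(q)e^{-c_1q^0/q_1}+\mathbf 1_{q_1<0}f_R(q)e^{-c_1q^0/|q_1|}\Bigr)dq
\]
is strictly positive for $R_0$ large; this is precisely the paper's constant $\bar c=\bar c(f_{LR},c_1)>0$, obtained there after bounding below by the $x_1$-independent profile exactly as in your display following \eqref{f coercivity assumption}. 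With that substitution (and dropping the grazing-set analysis entirely), your decomposition yields $C_\ell=C_\ell(c_1,M,f_{LR})$ as asserted; as written, however, the key quantitative step is both unproven and, in the uniform form you state it, false.
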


\begin{proof}
    For the upper-bound, we observe
    \begin{multline*}
        (\mathcal{L}f)(x_1,p)=\int_\rth\int_{\mathbb{S}^2}v_{\textup{\o}} \sigma(g,\theta)f(x_1,q)d\omega dq
        \lesssim \int_\rth\int_{\mathbb{S}^2}
(p^0q^0)^{1/2}f(x_1,q)\sigma_0(\theta)d\omega dq\\*
\lesssim (p^0)^{1/2}\|f(x_1,\cdot)\|_{L^1_{p,1/2}}\lesssim M(p^0)^{1/2} ,
    \end{multline*}using \eqref{angular assumption}, \eqref{gINEQ}, and the fact that  $v_{\textup{\o}}\lesssim 1.$

    For the lower-bound we observe that
     \begin{multline*}
        (\mathcal{L}f)(x_1,p)=\int_\rth\int_{\mathbb{S}^2}v_{\textup{\o}} \sigma(g,\theta)f(x_1,q)d\omega dq\approx \int_\rth\int_{\mathbb{S}^2}\frac{g^2\sqrt{s}}{p^0q^0}f(x_1,q)\sigma_0(\theta)d\omega dq\\
        \gtrsim \int_\rth\int_{\mathbb{S}^2}
\frac{|p-q|^3 }{(p^0q^0)^{5/2}} f(x_1,q)\sigma_0(\theta)d\omega dq\approx c_0\int_\rth\int_{\mathbb{S}^2}
\frac{|p-q|^3 }{(p^0q^0)^{5/2}} f(x_1,q)dq,
    \end{multline*}by \eqref{angular assumption} using $\sqrt{s}\ge g \gtrsim \frac{|p-q|}{\sqrt{p^0q^0}}.$ and use the spherical coordinate for $q$ so that the angle between $p$ and $q$ is $\varphi.$ 
    Note that we further have
    \begin{equation}\label{pqp0q0}
        |p-q|\ge |p^0-q^0|\ge p^0-q^0,
    \end{equation}because 
    \begin{multline*}
        |p^0-q^0|=|\sqrt{1+|p|^2}-\sqrt{1+|q|^2}|
        =\frac{||p|^2-|q|^2|}{\sqrt{1+|p|^2}+\sqrt{1+|q|^2}}\\
        =\frac{||p|+|q||||p|-|q||}{\sqrt{1+|p|^2}+\sqrt{1+|q|^2}}\le ||p|-|q||\ \le|p-q|.
    \end{multline*}Therefore, we first have
    \begin{multline}\label{Lf lower 2}
        (\mathcal{L}f)(x_1,p)\ge c_0(p^0)^{1/2}\int_\rth
\frac{1}{(q^0)^{5/2}} f(x_1,q) dq
-\frac{c_0}{(p^0)^{5/2}}\int_\rth
(q^0)^{1/2}  f(x_1,q) dq.
    \end{multline}
Using \eqref{lower assumption} and \eqref{f coercivity assumption} in \eqref{Lf lower 2}, we have
    \begin{align*}\notag
        (\mathcal{L}f)(x_1,p)&\ge c_0(p^0)^{1/2}\int_\rth \frac{dq}{(q^0)^{5/2}}\bigg( 
  1_{q_1>0}f_L(q)\exp\left(-\frac{c_1x_1}{\hat{q}_1}\right)
        \\*&\quad+1_{q_1<0}f_R(q)\exp\left(\frac{c_1(1-x_1)}{\hat{q}_1}\right)\bigg)
-\frac{c_0}{(p^0)^{5/2}}M\\
&\ge c_0(p^0)^{1/2}\int_\rth \frac{dq}{(q^0)^{5/2}}\bigg( 
  1_{q_1>0}f_L(q)e^{\left(-\frac{c_1}{\hat{q}_1}\right)}
        +1_{q_1<0}f_R(q)e^{\left(\frac{c_1}{\hat{q}_1}\right)}\bigg)-c_0M\\&=c_0 \bar{c} (p^0)^{1/2}-c_0M,
    \end{align*}for some constant $\bar{c}>0$ that is determined by $f_{LR}$ and $c_1$ via \eqref{boundary condition}. Then together with $\mathcal{L}f\ge c_1$ in \eqref{lower assumption}, we have
    \begin{multline*}
        (A_1+A_2)\mathcal{L}f \ge A_1c_1 + A_2(c_0 \bar{c} (p^0)^{1/2}-c_0M)
        \ge A_2c_0\bar{c}(p^0)^{1/2} +(A_1c_1 -A_2c_0M).
    \end{multline*} By choosing $A_1$ sufficiently large and $A_2$ sufficiently small such that 
    $$A_1c_1 -A_2c_0M\ge 0,$$ we can conclude that 
    $$\mathcal{L}f \ge \frac{A_2c_0\bar{c}}{A_1+A_2}(p^0)^{1/2} .$$
    This completes the proof.
\end{proof}
Also, we have the following continuity-type estimate for $\mathcal{L}$:
\begin{lemma}
    \label{L.cont}
    For each $x_1\in [0,1]$, we have 
    $$|(\mathcal{L}f-\mathcal{L}h)(x_1,p)|\lesssim \min \{(p^0)^{1/2} \|f-h\|_{L^1_{p,1/2}},e^{-\frac{k}{\sqrt{2}}}(p^0)^{1/2}\|f-h\|\}.$$
\end{lemma}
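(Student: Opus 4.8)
The plan is to estimate the difference $Lf-Lh$ using the center-of-momentum representation \eqref{Lf}, namely
\[
(Lf-Lh)(x_1,p)=\int_{\mathbb{R}^3}\int_{\mathbb{S}^2}v_{\textup{\o}}\,\sigma(g,\theta)\,(f-h)(x_1,q)\,d\omega\,dq,
\]
and to bound the kernel $v_{\textup{\o}}\sigma(g,\theta)$ exactly as in the upper-bound part of Proposition~\ref{lowerL}. Using the hard-sphere assumption \eqref{kernel condition}, the Møller velocity identity \eqref{mollerV}, and the coercive inequality \eqref{gINEQ} together with $v_{\textup{\o}}\lesssim 1$ and $g\le 2\sqrt{p^0q^0}$, one obtains the pointwise bound $v_{\textup{\o}}\sigma(g,\theta)\lesssim (p^0q^0)^{1/2}\sigma_0(\theta)$. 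Integrating in $\omega$ using the angular normalization \eqref{angular assumption} then yields
\[
|(Lf-Lh)(x_1,p)|\lesssim (p^0)^{1/2}\int_{\mathbb{R}^3}(q^0)^{1/2}\,|(f-h)(x_1,q)|\,dq
= (p^0)^{1/2}\,\|(f-h)(x_1,\cdot)\|_{L^1_{q,1/2}}.
\]
Taking the essential supremum over $x_1$ (or just noting this holds for each fixed $x_1$) gives the first term in the minimum, $(p^0)^{1/2}\|f-h\|_{L^1_{p,1/2}}$.

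For the second term, the idea is to insert the exponential weight $\varphi(q)=e^{kq^0}$. Since $q^0=\sqrt{1+|q|^2}\ge 1$, but more to the point one needs a uniform gain factor, I would write $(q^0)^{1/2}=(q^0)^{1/2}\varphi(q)\cdot\varphi(q)^{-1}$ inside the integral, so that
\[
|(Lf-Lh)(x_1,p)|\lesssim (p^0)^{1/2}\int_{\mathbb{R}^3}\varphi(q)^{-1}\,(q^0)^{1/2}\varphi(q)\,|(f-h)(x_1,q)|\,dq.
\]
Now $\varphi(q)^{-1}=e^{-kq^0}\le e^{-k/\sqrt{2}}$ is \emph{not} immediately available from $q^0\ge 1$; the bound $e^{-kq^0}\le e^{-k}$ would follow from $q^0\ge 1$, and $e^{-k}\le e^{-k/\sqrt2}$ anyway, so in fact $\varphi(q)^{-1}\le e^{-k/\sqrt2}$ holds trivially since $q^0\ge 1>1/\sqrt2$. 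Pulling this constant out of the integral and recognizing what remains as $\operatorname*{ess\,sup}_{x_1}|(f-h)(x_1,q)|$ integrated against $(q^0)^{1/2}\varphi(q)$ — that is, exactly the norm $\|f-h\|$ — gives
\[
|(Lf-Lh)(x_1,p)|\lesssim e^{-\frac{k}{\sqrt2}}(p^0)^{1/2}\|f-h\|.
\]
Combining the two bounds yields the minimum in the statement.

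I do not anticipate a genuine obstacle here: the lemma is a soft consequence of the linear structure of $L$, the kernel bound already isolated in Proposition~\ref{lowerL}, and the monotonicity of the weights. The only point requiring mild care is bookkeeping with the essential supremum in $x_1$ versus the pointwise-in-$x_1$ statement: the first estimate is naturally pointwise in $x_1$ with $\|f-h\|_{L^1_{p,1/2}}$ replaced by $\|(f-h)(x_1,\cdot)\|_{L^1_{q,1/2}}$, and one then bounds this by $\operatorname*{ess\,sup}_{x_1}$, consistent with the norm definitions in Section~\ref{sec.norm}; the second estimate directly produces the $L^\infty_{x_1}$-type norm $\|f-h\|$ because we take the essential supremum over $x_1$ inside the $q$-integral, which is legitimate since $|(f-h)(x_1,q)|\le \operatorname*{ess\,sup}_{x_1}|(f-h)(x_1,q)|$ for a.e.\ $x_1$. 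The role of this lemma — a Lipschitz bound on $L$ with the small prefactor $e^{-k/\sqrt2}$ when $k$ is large — is precisely what makes the solution operator $A$ a contraction, so it is worth stating the weighted version explicitly even though the unweighted $L^1_p$ version is the one obtained "for free."
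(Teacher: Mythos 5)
Your proof is correct and follows essentially the same route as the paper: the first bound via $v_{\textup{\o}}\lesssim 1$, the hard-sphere kernel, and $g\lesssim (p^0q^0)^{1/2}$ with the angular normalization, and the second bound by extracting an exponential smallness factor from $\varphi(q)^{-1}$. The only (harmless) difference is in that extraction: the paper splits $e^{-kq^0}\le e^{-\frac{k}{\sqrt2}(1+|q|)}$ and uses the $|q|$-dependent part to absorb $(q^0)^{1/2}$, whereas you keep the full weight $(q^0)^{1/2}\varphi(q)$ of the norm $\|\cdot\|$ and use $q^0\ge 1$ to pull out $e^{-k}\le e^{-k/\sqrt2}$, which even gives a marginally stronger prefactor.
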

\begin{proof}
    Using the center-of-momentum representation \eqref{Lf} of $L,$ we have 
    $$(\mathcal{L}f-\mathcal{L}h)(x_1,p)=\int_\rth\int_{\mathbb{S}^2} v_{\textup{\o}}\sigma(g,\theta)(f(x_1,q)-h(x_1,q))~d\omega dq.$$ Using $v_{\textup{\o}}\lesssim 1$ and $$\int_{\mathbb{S}^2}\sigma(g,\theta)d\omega\lesssim  g  \lesssim (p^0q^0)^{1/2},$$ we obtain for each $x_1\in [0,1],$   $$|(\mathcal{L}f-\mathcal{L}h)(x_1,p)|\lesssim (p^0)^{1/2} \|f-h\|_{L^1_{p,1/2}}.$$

    Alternatively, if we use, for every $q\in\rth$, that $$\varphi(q)^{-1}(p^0q^0)^{1/2}= e^{-k\sqrt{1+|q|^2}}(p^0q^0)^{1/2} \le e^{-\frac{k}{\sqrt{2}}(1+|q|)}(p^0q^0)^{1/2},$$  then we have
     \begin{multline*}
         |(\mathcal{L}f-\mathcal{L}h)(x_1,p)|\lesssim e^{-\frac{k}{\sqrt{2}}}(p^0)^{1/2} \int_\rth (q^0)^{1/2} e^{-\frac{k}{\sqrt{2}}|q|}\varphi(q)|f(x_1,q)-h(x_1,q)|dq\\
     \lesssim  e^{-\frac{k}{\sqrt{2}}}(p^0)^{1/2}\int_\rth \varphi(q)|f(x_1,q)-h(x_1,q)|dq\lesssim  e^{-\frac{k}{\sqrt{2}}}(p^0)^{1/2} \|f-h\|.
     \end{multline*}  This completes the proof.
\end{proof}
This completes our discussion of the loss term operator $\mathcal{L}$.   In the next section we will prove estimates for $Q^+(f,h)$ from \eqref{Q original} for an arbitrary function $f,h\ge 0$.

\section{Potential and Pointwise Estimates for the Gain Operator}
\label{egain}
In this section, we provide two necessary estimates for the gain term of the collision operator: potential-type and pointwise estimates. 
Note that throughout this section we assume that the arbitrary non-negative measurable function $f$ is not necessarily a solution to \eqref{RBE}.

\subsection{Potential-type estimate of $Q^+$.}

We first prove a potential-type estimate for the relativistic collision operator:\begin{proposition}\label{lemma1}
Let $\varphi(p) \eqdef e^{k p^0}$ for some $k>0$.  
Then, for any $a \in \mathbb{R}^3$, there holds
\[
\int_{\mathbb{R}^3}
\frac{\varphi(p)}{|p-a|}
\, \|Q^+(f,f)(\cdot,p)\|_{L^\infty_{x_1}}
\, dp
\;\lesssim\;
\|f\|^2,
\]
with the implicit constant independent of $a$.
\end{proposition}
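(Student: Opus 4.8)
The plan is to start from the center-of-momentum representation of the gain term, namely
\[
Q^+(f,f)(x_1,p)
=\int_{\mathbb{R}^3}\int_{\mathbb{S}^2}
v_{\textup{\o}}\,\sigma(g,\theta)\,f(x_1,p')\,f(x_1,q')\,d\omega\,dq,
\]
pass the sup in $x_1$ inside the integral (this is legitimate since the kernel is $x_1$-independent and $f\ge 0$), and bound $\|f(\cdot,p')\|_{L^\infty_{x_1}}\|f(\cdot,q')\|_{L^\infty_{x_1}}$ pointwise. The first step is to dispose of the kinematic weights: using \(v_{\textup{\o}}=g\sqrt{s}/(p^0q^0)\), the hard-sphere bound \(\sigma(g,\theta)\approx g\sigma_0(\theta)\), and the coercive inequality \eqref{gINEQ} giving \(g\le 2\sqrt{p^0q^0}\) as well as \(\sqrt{s}=\sqrt{g^2+4}\lesssim \sqrt{p^0q^0}\), one gets \(v_{\textup{\o}}\sigma(g,\theta)\lesssim (p^0q^0)^{1/2}\sigma_0(\theta)\). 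Then one must convert this into a weight on the post-collisional variables; the identity \(p^0+q^0=p'^0+q'^0\) from \eqref{collision.invariants} together with \(p^0q^0\le (p^0+q^0)^2=(p'^0+q'^0)^2\) and the elementary exponential bound \(e^{kp^0}\le e^{k(p^0+q^0)}=e^{k(p'^0+q'^0)}=\varphi(p')\varphi(q')\) lets us trade \(\varphi(p)(p^0q^0)^{1/2}\) for something controlled by \((p^0)^{1/2}\varphi(p')\cdot(q^0)^{1/2}\varphi(q')\) up to harmless polynomial factors in \((p'^0,q'^0)\) absorbed into the exponentials (at the cost of shrinking \(k\) slightly, or rather we carry an extra \((p'^0q'^0)^{N}\) which is dominated by \(\varphi\) after splitting \(\varphi=e^{(k/2)p^0}e^{(k/2)p^0}\)).

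The heart of the argument is the change of variables. After the reductions above the quantity to estimate looks like
\[
\sup_{a}\int_{\mathbb{R}^3}\frac{1}{|p-a|}
\int_{\mathbb{R}^3}\int_{\mathbb{S}^2}
(p^0q^0)^{1/2}\sigma_0(\theta)\,
G(p')\,G(q')\,d\omega\,dq\,dp,
\]
where \(G(p):=(p^0)^{1/2}\varphi(p)\|f(\cdot,p)\|_{L^\infty_{x_1}}\), so that \(\int G\,dp=\|f\|\) once we've moved the weights. One then uses the Glassey--Strain type change of variables \((p,\omega)\mapsto(p',q')\) (or \((q,\omega)\mapsto(p',q')\)) adapted to the center-of-momentum parametrization \eqref{p'}: for fixed \(q\) the map \((p,\omega)\mapsto p'\) has a Jacobian comparable to \(|p-a|^{-1}\)-friendly powers, and this is precisely where the \(1/|p-a|\) potential kernel gets absorbed. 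Concretely, I expect to fix \(q'\), integrate in \(p'\), and use the fact that the singular factor \(1/|p-a|\) is integrable over lower-dimensional/angular pieces of the collision manifold so that \(\int_{\mathbb{S}^2}\frac{d\omega}{|p-a|}\) (or the corresponding reduced integral after the \(dq\) integration) is uniformly bounded in \(a\) — this is the same mechanism by which \(1/|p-a|\in L^1_{\mathrm{loc}}+L^\infty\)-type reasoning works in the classical Boltzmann \(L^1L^\infty\) theory. After the change of variables the triple integral decouples into \(\left(\int G(p')\,dp'\right)\left(\int G(q')\,dq'\right)=\|f\|^2\), modulo a uniformly bounded constant coming from the residual \(\sigma_0\in L^1(\mathbb{S}^2)\) and the Jacobian/potential bound.

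The main obstacle, and the step that needs real care rather than bookkeeping, is the change of variables together with the uniform-in-\(a\) control of the potential singularity: the relativistic post-collisional map \eqref{p'} involves the \((\gamma-1)\)-correction term, so the Jacobian of \((p,\omega)\mapsto(p',q')\) is genuinely more delicate than in the Newtonian case and degenerates as \(|p+q|\to\infty\); one must check that the degeneracy is compensated by the \((p^0q^0)^{1/2}\) prefactor and the exponential weights, and that the \(1/|p-a|\) factor — which is \emph{not} invariant under this nonlinear map — transforms into something still integrable against the resulting measure uniformly in \(a\). I would handle this by the standard device of splitting the \(p\)-integral into the region \(\{|p-a|\le 1\}\), where \(1/|p-a|\in L^1_{\mathrm{loc}}\) and one simply uses the boundedness of the rest of the integrand plus \(G\in L^\infty\cap L^1\) via \(\|f\|\), and the region \(\{|p-a|>1\}\), where \(1/|p-a|\le 1\) and one performs the full change of variables to get \(\|f\|^2\). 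Summing the two regimes yields the claimed bound with a constant independent of \(a\), completing the proof.
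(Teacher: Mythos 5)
Your proposal contains a genuine gap at precisely the point you yourself flag as "the heart of the argument." The mechanism you describe for absorbing the Coulomb factor does not work as stated: the factor $1/|p-a|$ is independent of $\omega$, so $\int_{\mathbb{S}^2}\frac{d\omega}{|p-a|}=\frac{4\pi}{|p-a|}$ is \emph{not} uniformly bounded in $a$, and the Jacobian of the collision map $(p,\omega)\mapsto(p',q')$ has nothing to do with the external point $a$, so it cannot "absorb" the potential kernel. Your fallback for the near-singular region $\{|p-a|\le 1\}$ also fails: there you would need a pointwise-in-$p$ bound on $\varphi(p)\|Q^+(f,f)(\cdot,p)\|_{L^\infty_{x_1}}$, i.e.\ an $L^\infty_p$ bound on the gain term, but the only hypothesis is $\|f\|<\infty$, which is an $L^1$-type bound in $p$; the gain term of an $L^1$ density need not be bounded (this is exactly why the paper introduces the $\|\cdot\|_{-1}$ norm, and the proposition being proved \emph{is} the statement that $Q^+$ gains this $(-\Delta_p)^{-1}$ regularity from $L^1$-type data alone). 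So neither regime of your splitting closes.

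The missing idea in the paper's proof is to first use the pre--post relabelling $(p^\mu,q^\mu)\to(p'^{\mu},q'^{\mu})$ (a symmetry of the transition rate, with Jacobian $p^0q^0/(p'^0q'^0)$ and $g,s$ invariant), which moves the norms $\|f(\cdot,\cdot)\|_{L^\infty_{x_1}}$ onto the integration variables $p,q$ and places the singular weight on the post-collisional momentum, $\varphi(p')/|p'-a|$. With $p,q$ fixed, $p'$ is an explicit function of $\omega$ via \eqref{p'}, and the pointwise lower bound
\[
\Bigl|\tfrac{p+q}{2}+\tfrac{g}{2}\bigl(I+(\gamma-1)A\otimes A\bigr)\omega-a\Bigr|
\;\ge\;\tfrac{g}{2}\,|\omega-a''|,
\qquad A=\tfrac{p+q}{|p+q|},
\]
(valid because $I+(\gamma-1)A\otimes A\succeq I$) converts the singularity into an angular potential; the factor $g$ in the kernel cancels the $2/g$, and the explicit computation $\frac{1}{4\pi}\int_{\mathbb{S}^2}|\omega-a''|^{-1}\,d\omega\le 1$, uniform in $a''$, finishes the angular integral. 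After that, $v_{\textup{\o}}\lesssim 1$, $g\lesssim\sqrt{p^0q^0}$, and $\varphi(p')\le\varphi(p)\varphi(q)$ give $I_a\lesssim\|f\|^2$ with no region splitting and no Jacobian analysis of the nonlinear map $(p,\omega)\mapsto(p',q')$. Without the relabelling step and the sphere-potential bound, your outline cannot be completed as written.
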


\begin{proof}
We recall the definition of the $Q^+$ term in \eqref{Q original} as follows:
\begin{multline*}
I_a\eqdef \int_\rth\frac{\varphi(p)}{|p-a|}\|Q^+(f,f)(\cdot,p)\|_{L^\infty_{x_1}}dp
\\
\le  \int_{\rth}dp\ \frac{\varphi(p)}{|p-a|}\int_{\mathbb{R}^3}dq\int_{\mathbb{S}^2}d\omega\ v_{\textup{\o}}g \sigma_0(\theta)\|f(\cdot,p')\|_{L^\infty_{x_1}}\|f(\cdot,q')\|_{L^\infty_{x_1}},
\end{multline*}where we recall that $v_{\textup{\o}}=\frac{g\sqrt{s}}{p^0q^0}.$ 
Taking a pre-post relabelling of the variables $(p^{\mu}, q^{\mu}) \to (p^{\prime\mu}, q^{\prime\mu})$ with the Jacobian determinant $\frac{p^0q^0}{p'^0q'^0}$ and using the fact that  $s$ and $g$ are invariant under this transformation, we obtain that the integral $I_a$ is bounded from above as
$$
I_a
\le  \int_{\rth}dp\ \|f(\cdot,p)\|_{L^\infty_{x_1}}\int_{\mathbb{R}^3}dq\ \|f(\cdot,q)\|_{L^\infty_{x_1}}v_{\textup{\o}}g\int_{\mathbb{S}^2}d\omega\  \frac{\varphi(p')}{|p'-a|}\sigma_0(\theta).
$$
Now we use the representation \eqref{p'} of $p'$ and observe that
\begin{multline*}
    I_a
\le  \int_{\rth}dp\ \|f(\cdot,p)\|_{L^\infty_{x_1}}\int_{\mathbb{R}^3}dq\ \|f(\cdot,q)\|_{L^\infty_{x_1}}v_{\textup{\o}}g\\\times \int_{\mathbb{S}^2}d\omega\ \sigma_0(\theta) \frac{\varphi(p')}{\left|\frac{p+q}{2}+\frac{g}{2}\left(\omega+(\gamma-1)(p+q)\frac{(p+q)\cdot\omega}{|p+q|^2}\right)-a\right|}.
\end{multline*}
We then observe from $\displaystyle|x|=\max_{|y|=1}|y^{\top}x|$ that for any $a'\in\rth$,
\begin{align*}
&\left|\omega+(\gamma-1)(p+q)\frac{(p+q)\cdot \omega}{|p+q|^2}-a'\right|\\
&=\big|\left(I+(\gamma-1)A\otimes A\right)\omega-a' \big|\cr
&=\big|\left(I+(\gamma-1)A\otimes A\right)(\omega-a'') \big|\cr
&=\max_{|y|=1}\big|y^{\top}\left(I+(\gamma-1)A\otimes A\right)(\omega-a'')\big|\cr
&\geq\left|\left(\frac{\omega-a''}{|\omega-a''|}\right)^{\top}\left(I+(\gamma-1)A\otimes A\right)(\omega-a'')\right|\cr
&=\frac{1}{|\omega-a''|}\left(|\omega-a''|^2+(\gamma-1)\left\{A\cdot (\omega-a'')\right\}^2\right)\\*
&\geq |\omega-a''|,
\end{align*}where $\otimes$ is used to express the outer product $A(A\cdot \omega)=(A\otimes A) \omega$ and we denote $a''=\left(I+(\gamma-1)A\otimes A\right)^{-1}a'$ and $A=\frac{p+q}{|p+q|}$, and also the last inequality holds because $$\gamma-1=\frac{p^0+q^0-\sqrt{s}}{\sqrt{s}}=\frac{|p+q|^2}{\sqrt{s}(p^0+q^0+\sqrt{s})}>0.$$ The inverse $\left(I+(\gamma-1)A\otimes A\right)^{-1}$ exists, since $I+(\gamma-1)A\otimes A$ is positive definite. By choosing $a'=\frac{2}{g}\left(a-\frac{p+q}{2}\right)$, we have \begin{equation}\begin{split}\label{Ia estimate}
I_a
&\le  \int_{\rth}dp\int_{\mathbb{R}^3}dq\int_{\mathbb{S}^2}d\omega\ v_{\textup{\o}}g \sigma_0(\theta)\frac{\|f(\cdot,p)\|_{L^\infty_{x_1}}\|f(\cdot,q)\|_{L^\infty_{x_1}}\varphi(p')}{\left|\frac{p+q}{2}+\frac{g}{2}\left(\omega+(\gamma-1)(p+q)\frac{(p+q)\cdot\omega}{|p+q|^2}\right)-a\right|}\\
&=\int_{\rth}dp\int_{\mathbb{R}^3}dq\int_{\mathbb{S}^2}d\omega\ v_{\textup{\o}}g \sigma_0(\theta)\frac{\|f(\cdot,p)\|_{L^\infty_{x_1}}\|f(\cdot,q)\|_{L^\infty_{x_1}}\varphi(p')}{\frac{g}{2}\left|\omega+(\gamma-1)(p+q)\frac{(p+q)\cdot\omega}{|p+q|^2}-a'\right|}\\
&\le \frac{1}{4\pi}\int_{\rth}dp\ \|f(\cdot,p)\|_{L^\infty_{x_1}}\int_{\mathbb{R}^3}dq\ \|f(\cdot,q)\|_{L^\infty_{x_1}}v_{\textup{\o}}g\int_{\mathbb{S}^2}d\omega\  \frac{2\varphi(p')}{g|\omega-a''|},
\end{split}\end{equation}where we used $\sigma_0(\theta)\le \frac{1}{4\pi}$ by the hypothesis in Section \ref{sec.ang.hypo}. Now we further calculate the integral:
$$
\int_{\mathbb{S}^2} \frac{d\omega}{|\omega - a''|},
$$
where $\omega \in \mathbb{S}^2$ (a point on the unit sphere) and $a'' \in \mathbb{R}^3$ is a fixed vector.
   Without loss of generality, we can place the point $a''$ at a distance $r = |a''|$ from the origin, because the integral’s value will depend only on the distance $r$ due to spherical symmetry. Write $ a'' = (0, 0, r) $ in spherical coordinates.
    For any point $ \omega = (\sin \theta \cos \phi, \sin \theta \sin \phi, \cos \theta) $ on $\mathbb{S}^2$, we have
   $$
   |\omega - a''| = \sqrt{(\sin \theta \cos \phi)^2 + (\sin \theta \sin \phi)^2 + (\cos \theta - r)^2}.
   $$
   This expression simplifies to
   $$
   |\omega - a''| = \sqrt{1 + r^2 - 2r \cos \theta}.
   $$
   The surface measure on $\mathbb{S}^2$ is $ d\omega = \sin \theta \, d\theta \, d\phi $, and the integral becomes
   $$
   \int_{\mathbb{S}^2} \frac{d\omega}{|\omega - a''|} = \int_0^{2\pi} \int_0^{\pi} \frac{\sin \theta \, d\theta \, d\phi}{\sqrt{1 + r^2 - 2r \cos \theta}}.
   $$
   The integrand does not depend on $\phi$, and we can factor out the $\phi$-integral:
   $$
   \int_{\mathbb{S}^2} \frac{d\omega}{|\omega - a''|} = 2\pi \int_0^{\pi} \frac{\sin \theta \, d\theta}{\sqrt{1 + r^2 - 2r \cos \theta}}.
   $$ Finally, we make a change of variables $\theta\mapsto z=\cos\theta$. Then we observe that 
 $$
   \int_{\mathbb{S}^2} \frac{d\omega}{|\omega - a''|} = 2\pi \int_0^{\pi} \frac{\sin \theta \, d\theta}{\sqrt{1 + r^2 - 2r \cos \theta}}=2\pi \int_{-1}^1 \frac{dz}{\sqrt{1 + r^2 - 2rz}}.
   $$ By denoting $A= 1+r^2$ and $B=2r$ and by making another change of variables $z\mapsto u\eqdef \sqrt{A-Bz}$ such that $2udu=-Bdz,$ we have
\begin{multline*}
   \int_{\mathbb{S}^2} \frac{d\omega}{|\omega - a''|} = 2\pi \int_{-1}^1 \frac{dz}{\sqrt{A - Bz}}=2\pi \int_{\sqrt{A-B}}^{\sqrt{A+B}} \frac{2}{B}\frac{udu}{u} = \frac{4\pi}{B}(\sqrt{A+B}-\sqrt{A-B})\\*=\frac{2\pi}{|a''|}(1+|a''|-|1-|a''||).
\end{multline*} If $|a''|\ge 1,$ then we have 
$$ \int_{\mathbb{S}^2} \frac{d\omega}{|\omega - a''|} =\frac{2\pi}{|a''|}(1+|a''|-(|a''|-1))=\frac{4\pi}{|a''|}\le 4\pi.$$
On the other hand, if $|a''|<1,$ then we have
$$ \int_{\mathbb{S}^2} \frac{d\omega}{|\omega - a''|} =\frac{2\pi}{|a''|}(1+|a''|-(1-|a''|))= 4\pi.$$
Therefore, in any case, we have
$$ \frac{1}{4\pi}\int_{\mathbb{S}^2} \frac{d\omega}{|\omega - a''|} \le 1.$$
Plugging this back into \eqref{Ia estimate} and using $v_{\textup{\o}}\lesssim 1$, $g\lesssim \sqrt{p^0q^0}$, and $p'^0\le p^0+q^0$ such that $\varphi(p')\le \varphi(p)\varphi(q)$, we obtain
$$I_a
\lesssim  \int_{\rth}dp\ (p^0)^{1/2}\varphi(p)\|f(\cdot,p)\|_{L^\infty_{x_1}}\int_{\mathbb{R}^3}dq\ (q^0)^{1/2}\varphi(q)\|f(\cdot,q)\|_{L^\infty_{x_1}}\approx \| f\|^2.$$
This completes the proof.
\end{proof}




%
%
%
%

\subsection{Pointwise estimate}
In this section, we establish an additional weighted pointwise 
 estimate as in the following lemma.
\begin{lemma}\label{lemma.pointwise.gain of weight}
Let $\varphi(p) \eqdef e^{k p^0}$ for some $k>0$.  
Then the following estimate holds:
\[
\int_{\mathbb{R}^3}
\varphi(p)\,
\|Q^+(f,h)(\cdot,p)\|_{L^\infty_{x_1}}
\, dp
\;\lesssim\;
k^{-1}\,\|f\|_{-1}\,\|h\|.
\]
\end{lemma}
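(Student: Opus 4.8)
The plan is to reuse the pre--post collisional change of variables from the proof of Proposition~\ref{lemma1}, but --- and this is the whole point --- to perform the \(\omega\)-integration of the exponential weight \(\varphi(p')\) \emph{exactly}, rather than bounding it crudely by \(\varphi(p)\varphi(q)\). Integrating \(e^{kp'^0}\) over the sphere produces simultaneously a factor \(k^{-1}\) and a Coulomb-type factor \(|p+q|^{-1}\); the latter is a kernel in \(p\) centred at \(-q\), which is precisely the structure measured by \(\|f\|_{-1}\).

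\textbf{Step 1: setup and change of variables.} Set \(J:=\int_{\mathbb{R}^3}\varphi(p)\,\|Q^+(f,h)(\cdot,p)\|_{L^\infty_{x_1}}\,dp\). Using the center-of-momentum form \eqref{Q center of momentum}, the hypothesis \(\sigma(g,\theta)\approx g\,\sigma_0(\theta)\), and pulling the essential supremum in \(x_1\) inside the (non-negative) collision integral, one gets
\[
J\;\lesssim\;\int_{\mathbb{R}^3}\!\int_{\mathbb{R}^3}\!\int_{\mathbb{S}^2}\varphi(p)\,v_{\textup{\o}}\,g\,\sigma_0(\theta)\,\|f(\cdot,p')\|_{L^\infty_{x_1}}\|h(\cdot,q')\|_{L^\infty_{x_1}}\,d\omega\,dq\,dp .
\]
Applying the pre--post relabelling \((p,q)\mapsto(p',q')\) as in Proposition~\ref{lemma1} (Jacobian \(p^0q^0/(p'^0q'^0)\); \(g,s\) and \(d\omega\) invariant) moves the factors \(f,h\) onto the unprimed momenta and the weight onto \(p'\), and the combination \(v_{\textup{\o}}g\) is reproduced as \(g^2\sqrt{s}/(p^0q^0)\):
\[
J\;\lesssim\;\int_{\mathbb{R}^3}\!\int_{\mathbb{R}^3}\!\int_{\mathbb{S}^2}\frac{g^2\sqrt{s}}{p^0q^0}\;\varphi(p')\,\sigma_0(\theta)\,\|f(\cdot,p)\|_{L^\infty_{x_1}}\|h(\cdot,q)\|_{L^\infty_{x_1}}\,d\omega\,dq\,dp ,
\]
with \(p'=p'(p,q,\omega)\) given by \eqref{p'}.

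\textbf{Step 2: the spherical integral (the crux).} By \eqref{energypq}, \(p'^0=\tfrac12(p^0+q^0)+\tfrac{g}{2\sqrt{s}}\,\omega\cdot(p+q)\). Parametrising \(\omega\) by its angle to \(p+q\) and using \(\sigma_0\le\tfrac1{4\pi}\),
\[
\int_{\mathbb{S}^2}\varphi(p')\,\sigma_0(\theta)\,d\omega
\;\le\;e^{k(p^0+q^0)/2}\,\frac{\sinh\alpha}{\alpha},
\qquad \alpha:=\frac{kg|p+q|}{2\sqrt{s}} .
\]
Using \(\sinh\alpha\le\tfrac12e^\alpha\) and the inequality \(k(p^0+q^0)/2+\alpha=k\bigl(\tfrac12(p^0+q^0)+\tfrac{g|p+q|}{2\sqrt{s}}\bigr)\le k(p^0+q^0)\) --- the bracket being the maximal value of \(p'^0\) over \(\omega\), which by energy conservation \(p'^0+q'^0=p^0+q^0\) and \(q'^0\ge1\) is at most \(p^0+q^0-1\) --- we obtain
\[
\int_{\mathbb{S}^2}\varphi(p')\,\sigma_0(\theta)\,d\omega\;\lesssim\;\frac{\sqrt{s}}{k\,g\,|p+q|}\,\varphi(p)\varphi(q).
\]

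\textbf{Step 3: conclusion.} Inserting this and using \(g\le2\sqrt{p^0q^0}\) and \(s=g^2+4\le 8p^0q^0\) from Lemma~\ref{coersive inequality} (recall \(p^0,q^0\ge1\)), the kinematic weights collapse to
\[
\frac{g^2\sqrt{s}}{p^0q^0}\cdot\frac{\sqrt{s}}{k\,g\,|p+q|}=\frac{g\,s}{k\,p^0q^0\,|p+q|}\;\lesssim\;\frac1k\,\frac{(p^0q^0)^{1/2}}{|p+q|},
\]
so that, by Tonelli's theorem,
\[
J\;\lesssim\;\frac1k\int_{\mathbb{R}^3}(q^0)^{1/2}\varphi(q)\,\|h(\cdot,q)\|_{L^\infty_{x_1}}\left(\int_{\mathbb{R}^3}\frac{(p^0)^{1/2}\varphi(p)}{|p-(-q)|}\,\|f(\cdot,p)\|_{L^\infty_{x_1}}\,dp\right)dq .
\]
The inner integral is \(\le\|f\|_{-1}\) (take \(a=-q\) in the supremum defining \(\|f\|_{-1}\)), while the remaining integral in \(q\) equals \(\|h\|\); hence \(J\lesssim k^{-1}\|f\|_{-1}\|h\|\), which is the claim. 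The only delicate point is the ordering in Step~2: the weight \(\varphi(p')\) must be integrated over \(\mathbb{S}^2\) \emph{before} invoking any bound of the form \(\varphi(p')\le\varphi(p)\varphi(q)\). It is exactly the integration of the exponential along the one-dimensional direction in which \(p'^0\) varies that generates the decisive factor \((kg|p+q|)^{-1}\); combined with the surviving kinematic weight this produces the Coulomb kernel \(|p+q|^{-1}\), matched verbatim by \(\|f\|_{-1}\), together with the explicit \(k^{-1}\) gain. Bounding the weight beforehand would destroy this structure and yield only \(\|f\|\,\|h\|\).
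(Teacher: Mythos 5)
Your proof is correct and follows essentially the same route as the paper's: after the pre--post relabelling you integrate $e^{kp'^0}$ exactly over $\mathbb{S}^2$ using \eqref{energypq}, obtaining the factor $\frac{\sqrt{s}}{kg|p+q|}$ together with $\varphi(p)\varphi(q)$ (via $g\le\sqrt{s}$, $|p+q|\le p^0+q^0$), and then collapse the kinematic weights and read off $\|f\|_{-1}$ by taking $a=-q$ and $\|h\|$ in the remaining integrals. The only differences are cosmetic (starting from \eqref{Q center of momentum} rather than the delta-function form, and justifying $\alpha\le k(p^0+q^0)/2$ via energy conservation), so no gaps to report.
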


\begin{proof}Using the representation \eqref{Q original} and \eqref{W} for $Q^+$ we first observe that
\begin{equation}\begin{split}\label{initial I}
I&\eqdef \int_\rth \varphi(p)\|Q^+(f,h)(\cdot,p)\|_{L^\infty_{x_1}}dp
\\
&\le  \frac{1}{2}\int_{\rth}\frac{dp}{\pZ }\int_{\mathbb{R}^3}\frac{dq}{{\qZ }}\int_{\mathbb{R}^3}\frac{dq'\hspace{1mm}}{{\qZp }}\int_{\mathbb{R}^3}\frac{dp'\hspace{1mm}}{{\pZp }}  \varphi(p)sg \sigma_0(\theta) \delta^{(4)}(p^\mu +q^\mu -p'^\mu -q'^\mu)\\&\qquad\times \|f(\cdot,p')\|_{L^\infty_{x_1}}\|h(\cdot,q')\|_{L^\infty_{x_1}}.
\end{split}\end{equation}
Applying a pre-post relabelling of the variables $(p^{\mu}, q^{\mu}) \to (p^{\prime\mu}, q^{\prime\mu})$ and using the fact that  $s$ and $g$ are invariant under this transformation, we obtain that the integral $I$ is less than or equal to
\begin{multline*}
\frac{M}{2} \int_{\rth}\frac{dp}{\pZ }\int_{\mathbb{R}^3}\frac{dq}{{\qZ }}sg \|f(\cdot,p)\|_{L^\infty_{x_1}}\|h(\cdot,q)\|_{L^\infty_{x_1}}\\*\times \int_{\mathbb{R}^3}\frac{dq'}{{\qZp }}\int_{\mathbb{R}^3}\frac{dp'}{{\pZp }}e^{kp'^0} \delta^{(4)}(p^\mu +q^\mu -p'^\mu -q'^\mu),
\end{multline*}
 where we used $\sigma_0(\theta)\le M.$ Using \eqref{energypq}, we further have
\begin{multline*}
I\lesssim \int_{\rth}\frac{dp}{\pZ }\int_{\mathbb{R}^3}\frac{dq}{{\qZ }}sg \|f(\cdot,p)\|_{L^\infty_{x_1}}\|h(\cdot,q)\|_{L^\infty_{x_1}}\\\times\int_{\mathbb{R}^3}\frac{dq'}{{\qZp }}\int_{\mathbb{R}^3}\frac{dp'}{{\pZp }}e^{k(\frac{\pZ +\qZ }{2}+\frac{g}{2\sqrt{s}}\omega\cdot(p+q))} \delta^{(4)}(p^\mu +q^\mu -p'^\mu -q'^\mu).
\end{multline*}
The standard reduction to the \textit{center-of-momentum} frame representation \eqref{Q center of momentum} then gives that
$$I\lesssim  \int_{\rth}dp\int_{\mathbb{R}^3}dq \ v_{\textup{\o}} g \|f(\cdot,p)\|_{L^\infty_{x_1}}\|h(\cdot,q)\|_{L^\infty_{x_1}}\int_{\mathbb{S}^2}d\omega\ e^{k(\frac{\pZ +\qZ }{2}+\frac{g}{2\sqrt{s}}\omega\cdot(p+q))} .
$$

Note that 
\begin{align*}
   & \int_{\mathbb{S}^2}d\omega\ e^{k\frac{g}{2\sqrt{s}}\omega\cdot(p+q)}=\int_0^{2\pi} d\phi \int_0^\pi d\theta\ \sin\theta \ e^{k\frac{g}{2\sqrt{s}}|p+q|\cos\theta}\\
    &= \frac{4\pi\sqrt{s}}{kg|p+q|}(e^{k\frac{g}{2\sqrt{s}}|p+q|}-e^{-k\frac{g}{2\sqrt{s}}|p+q|})=\frac{8\pi\sqrt{s}}{kg|p+q|}\sinh\left(\frac{kg}{2\sqrt{s}}|p+q|\right)\\
    &\le \frac{8\pi\sqrt{s}}{kg|p+q|}e^{\frac{k}{2}(p^0+q^0)},
\end{align*}where we used $|p+q|\le p^0+q^0$ and $g\le \sqrt{s}$ for the last equality. Now using $\sqrt{s} \lesssim (p^0q^0)^{1 /2},$ we obtain
\begin{multline}\label{Ibound1}
    I\lesssim \int_{\rth}dp\int_{\mathbb{R}^3}dq \  v_{\textup{\o}}\frac{ (p^0q^0)^{1/2}}{k|p+q|}\|f(\cdot,p)\|_{L^\infty_{x_1}}\|h(\cdot,q)\|_{L^\infty_{x_1}} e^{k(\pZ +\qZ )}
   \lesssim  k^{-1}\|  f\|_{-1}\|  h\|,
\end{multline} 
since the norms are defined as
$$\|  f\|_{-1}= \sup_{a\in \rth}\int_{\rth}dp\ \frac{ (p^0)^{1/2}  }{|p-a|}\|f(\cdot,p)\|_{L^\infty_{x_1}}\varphi(p),$$ and $$\| f\|= \int_{\rth}dp\ (p^0)^{1/2}  \|f(\cdot,p)\|_{L^\infty_{x_1}}\varphi(p).$$ 
\end{proof}
In addition, we obtain the following corollary.
\begin{corollary}\label{cor.pointwise.gain of weight}
Let $\varphi(p) \eqdef e^{k p^0}$ for some $k>0$.  
Then we have
\[
\int_{\mathbb{R}^3}
\varphi(p)\,
\|Q^+(f,h)(\cdot,p)\|_{L^\infty_{x_1}}
\, dp
\;\lesssim\;
k^{-1}\,\|h\|_{-1}\,\|f\|.
\]
\end{corollary}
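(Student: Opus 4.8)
The plan is to observe that Corollary~\ref{cor.pointwise.gain of weight} is merely a regrouping of the intermediate estimate already established inside the proof of Lemma~\ref{lemma.pointwise.gain of weight}. Inspecting that argument, the only ingredients used were: the pre--post relabelling $(p^\mu,q^\mu)\to(p'^\mu,q'^\mu)$ (valid for arbitrary measurable $f,h\ge 0$, since $W$, $s$, $g$ and $\delta^{(4)}$ are all symmetric under this exchange), the cutoff bound $\sigma_0(\theta)\le M$, the post-collisional energy formula \eqref{energypq}, the elementary spherical integral yielding the factor $\frac{8\pi\sqrt s}{kg|p+q|}\sinh(\tfrac{kg}{2\sqrt s}|p+q|)\le \frac{8\pi\sqrt s}{kg|p+q|}e^{\frac k2(p^0+q^0)}$, and the pointwise bounds $v_{\textup{\o}}\lesssim 1$ and $\sqrt s\lesssim (p^0q^0)^{1/2}$. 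None of these steps exploits any symmetry between $f$ and $h$, so the proof of Lemma~\ref{lemma.pointwise.gain of weight} in fact produces the symmetric intermediate bound
\[
\int_{\mathbb{R}^3}\varphi(p)\,\|Q^+(f,h)(\cdot,p)\|_{L^\infty_{x_1}}\,dp
\;\lesssim\;
k^{-1}\int_{\mathbb{R}^3}\!\int_{\mathbb{R}^3}
\frac{(p^0q^0)^{1/2}\varphi(p)\varphi(q)}{|p+q|}
\,\|f(\cdot,p)\|_{L^\infty_{x_1}}\|h(\cdot,q)\|_{L^\infty_{x_1}}\,dp\,dq,
\]
which is precisely the quantity estimated in \eqref{Ibound1}.

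From here the two estimates differ only in how the singular kernel $|p+q|^{-1}$ is allocated between the two distributional factors. In Lemma~\ref{lemma.pointwise.gain of weight} it is paired with the $p$-integral, so that for each fixed $q$ the inner $p$-integral is dominated by $\|f\|_{-1}$ (choosing $a=-q$ in the definition of $\|\cdot\|_{-1}$), and the remaining $q$-integral equals $\|h\|$. For the corollary I would instead keep the $q$-integral inside: for each fixed $p$,
\[
\int_{\mathbb{R}^3}\frac{(q^0)^{1/2}\varphi(q)}{|q-(-p)|}\,\|h(\cdot,q)\|_{L^\infty_{x_1}}\,dq
\;\le\;
\sup_{a\in\mathbb{R}^3}\int_{\mathbb{R}^3}\frac{(q^0)^{1/2}\varphi(q)}{|q-a|}\,\|h(\cdot,q)\|_{L^\infty_{x_1}}\,dq
\;=\;\|h\|_{-1},
\]
while the outer $p$-integral $\int_{\mathbb{R}^3}(p^0)^{1/2}\varphi(p)\|f(\cdot,p)\|_{L^\infty_{x_1}}\,dp$ is exactly $\|f\|$. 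Combining these yields $\int_{\mathbb{R}^3}\varphi(p)\|Q^+(f,h)(\cdot,p)\|_{L^\infty_{x_1}}\,dp\lesssim k^{-1}\|h\|_{-1}\|f\|$, with the implicit constant unchanged.

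There is essentially no obstacle here: the entire content is already contained in the proof of Lemma~\ref{lemma.pointwise.gain of weight}, and the only point worth double-checking is that the pre--post relabelling and every subsequent estimate in that proof were carried out for generic $f,h$ rather than for $f=h$ — which is indeed the case. As an alternative route, one could note that the reflection $\omega\mapsto-\omega$ in the center-of-momentum representation \eqref{Q center of momentum} exchanges $p'$ and $q'$ while (after the cutoff bound $\sigma_0\le M$) leaving the integrand otherwise unchanged, so that Lemma~\ref{lemma.pointwise.gain of weight} applied to the pair $(h,f)$ gives the statement directly; but the regrouping argument above avoids even invoking this symmetry.
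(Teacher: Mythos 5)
Your proposal is correct, and it reaches the corollary by a slightly different (and arguably more economical) route than the paper. The paper's own proof goes back to the delta-function representation \eqref{initial I}, performs the change of variables $(p',q',\omega)\mapsto(q',p',-\omega)$, notes that the scattering angle becomes $\pi-\theta$ but $\sigma_0(\pi-\theta)\le M$ still holds, and then reruns the proof of Lemma~\ref{lemma.pointwise.gain of weight} with the roles of $f$ and $h$ exchanged. You instead observe that the lemma's proof is already carried out for generic $f,h$ and yields the symmetric double-integral bound in \eqref{Ibound1}, with kernel $k^{-1}(p^0q^0)^{1/2}\varphi(p)\varphi(q)|p+q|^{-1}$ against $\|f(\cdot,p)\|_{L^\infty_{x_1}}\|h(\cdot,q)\|_{L^\infty_{x_1}}$; since $|p+q|=|q-(-p)|$ and the integrand is non-negative (so Tonelli permits the regrouping), you bound the inner $q$-integral by $\|h\|_{-1}$ (taking $a=-p$ in the supremum) and the outer $p$-integral by $\|f\|$, giving $k^{-1}\|h\|_{-1}\|f\|$ with the same implicit constant. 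In short, the paper exploits the $f\leftrightarrow h$ symmetry of the collision kernel at the level of the collision integral, while you exploit the $p\leftrightarrow q$ symmetry of the final convolution-type kernel; your version avoids any discussion of the reflected angular argument, whereas the paper's version has the advantage of being stated purely at the level of the lemma (reusable even if the intermediate bound were organized differently). Your closing remark correctly identifies the paper's argument as the alternative route.
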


\begin{proof}
    By making the change of variables $(p',q',\omega)\mapsto (q',p',-\omega)$ at the level of \eqref{initial I}, we can have 
\begin{multline}\notag
 \int_\rth \varphi(p)\|Q^+(f,h)(\cdot,p)\|_{L^\infty_{x_1}}dp
\\
\le  \frac{1}{2}\int_{\rth}\frac{dp}{\pZ }\int_{\mathbb{R}^3}\frac{dq}{{\qZ }}\int_{\mathbb{R}^3}\frac{dp'\hspace{1mm}}{{\pZp }}\int_{\mathbb{R}^3}\frac{dq'\hspace{1mm}}{{\qZp }}  \varphi(p)sg \sigma_0(\pi-\theta) \delta^{(4)}(p^\mu +q^\mu -p'^\mu -q'^\mu)\\\times \|f(\cdot,q')\|_{L^\infty_{x_1}}\|h(\cdot,p')\|_{L^\infty_{x_1}}.
\end{multline} Since we have $\sigma_0(\pi-\theta)\le M,$ the rest of the proof follows the same as that of Lemma \ref{lemma.pointwise.gain of weight} with $f$ and $h$ swapped. This gives the desired result.
\end{proof}

\section{Construction of the Solution Operator}
\label{sec.main}
For the proof of the existence of a unique mild solution to the stationary problem, we will use the Banach fixed-point theorem applied in the following space:
\begin{multline}\label{sol space}
    \mathcal{A}=\bigg\{f \in L^\infty_{x_1}\left([0,1];L^1_p\!\left(\mathbb{R}^3,\,(p^0)^{1/2}\,dp\right)\right)
:f\ge 0,\ \mathcal{L}f\ge c_1, \ \\ 
        f(x_1,p)\ge 1_{p_1>0}f_L(p)\exp\left(-\frac{c_1 x_1}{\hat{p}_1}\right)
        +1_{p_1<0}f_R(p)\exp\left(\frac{(1-x_1)c_1}{\hat{p}_1}\right),\\\|f\|\le a_1, \ \textup{ and }
    \|f\|_{-1}\le a_2\bigg\},
\end{multline}  
  for some fixed positive constant $c_1$ and $a_i$ for $i=1,2.$ Here given $m,$ $M$, and $C_l$ from \eqref{boundary condition} and Proposition \ref{lowerL},
  \begin{itemize}
      \item $c_1$ is chosen such that $c_1=\frac{cm}{2}$ for the uniform constant $c$ given in \eqref{LAf1 lower}.
      \item $a_1$ is chosen such that $a_1\ge 2M+ 2\frac{1}{C_l}$.
      \item $a_2$ is chosen such that $a_2\ge  2M+2\frac{a_1^2}{C_l}$.
    \item Lastly, $k$ is chosen sufficiently large such that $k\ge a_1a_2$ and that  \eqref{k sufficiently large} holds. Also, we require $k$ is sufficiently large such that $\frac{e^{-\frac{k}{\sqrt{2}}}}{C_l^2}(C_lM+a_1a_2)$ are uniformly bounded and that it can be shown in the proof of Lemma \ref{contraction lemma} that $A$ is a contraction mapping from $\mathcal{A}$ to itself. 
  \end{itemize}
 \subsection{Solution operator $A$}Recall that we obtain the mild representations of the stationary solution in \eqref{f representation for p1}. Define the right-hand side of \eqref{f representation for p1} as the operator $A$. More precisely, we define the solution operator $A$ as follows:
 \begin{equation}\begin{split}
     \label{def.A}
     Af(x_1,p)&\eqdef 1_{\{p_1>0\}}f_L(p)\exp\left(-\frac{1}{\hat{p}_1}\int_{0}^{x_1} \mathcal{L}f(z,p)dz\right)\\*&\quad+1_{\{p_1>0\}}\frac{1}{\hat{p}_1}\int_0^{x_1}\exp\left(-\frac{1}{\hat{p}_1}\int_z^{x_1} \mathcal{L}f(z',p)dz'\right)Q^+(f,f)(z,p)dz \\*
     &\quad+1_{\{p_1<0\}}f_R(p)\exp\left(-\frac{1}{\hat{p}_1}\int_1^{x_1} \mathcal{L}f(z,p)dz\right)\\*
&\quad+1_{\{p_1<0\}}\frac{1}{\hat{p}_1}\int_1^{x_1}\exp\left(-\frac{1}{\hat{p}_1}\int_z^{x_1} \mathcal{L}f(z',p)dz'\right)Q^+(f,f)(z,p)dz\\*
&=:(Af)_1+(Af)_2+(Af)_3+(Af)_4.
 \end{split}\end{equation}Note that for $x_1\in [0,1],$ each $(Af)_i$ for $i=1,2,3,4$ is non-negative given that $f$ is non-negative.
 Then we can first prove that $A$ is a self-map from $\mathcal{A}$ to itself as follows.

\begin{lemma}[Self-map property of $A$]\label{selfmap lemma}
For a sufficiently large $k>0,$ we have 
    $$A\mathcal{A}\subset \mathcal{A}.$$
\end{lemma}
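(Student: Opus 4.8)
The plan is to verify, one by one, that each of the four defining properties of $\mathcal{A}$ in \eqref{sol space} is preserved by the operator $A$, using the estimates from Sections~\ref{sec.L} and~\ref{egain} together with the choice of constants $c_1, a_1, a_2, k$. First, non-negativity: since $f \ge 0$, $f_L, f_R \ge 0$ by \eqref{boundary condition}, $Q^+(f,f) \ge 0$, and $Lf \ge 0$, each of the four terms $(Af)_i$ in \eqref{def.A} is manifestly non-negative on $[0,1]$, so $Af \ge 0$. Second, the pointwise lower bound \eqref{f coercivity assumption}: since $f \in \mathcal{A}$ satisfies $Lf \ge c_1$, the integrating factors in $(Af)_1$ and $(Af)_3$ are bounded below, e.g. for $p_1 > 0$ we have $\exp(-\frac{1}{\hat p_1}\int_0^{x_1} Lf(z,p)\,dz) \ge \exp(-\frac{c_1 x_1}{\hat p_1})$; dropping the non-negative gain-term contributions $(Af)_2, (Af)_4$ then gives $Af(x_1,p) \ge 1_{p_1>0} f_L(p) e^{-c_1 x_1/\hat p_1} + 1_{p_1<0} f_R(p) e^{c_1(1-x_1)/\hat p_1}$, which is exactly the required bound.

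Third, and this is the main point, one must re-establish $Lf \ge c_1$ \emph{for $Af$ in place of $f$}, i.e. $L(Af) \ge c_1$. Here is where the chain of hypotheses closes: because $Af$ already satisfies the pointwise lower bound \eqref{f coercivity assumption} (just proved) and the $L^\infty_{x_1} L^1_{p,1/2}$ bound (which follows from the $\|\cdot\| \le a_1$ bound established below, via \eqref{function space ineq}), we may apply Proposition~\ref{lowerL} to $f := Af$: it yields $L(Af)(x_1,p) \ge C_\ell (p^0)^{1/2} \ge C_\ell$, and since $C_\ell$ depends only on $c_1, M, f_{LR}$, one checks that with $c_1 = \tfrac{cm}{2}$ (the choice flagged after \eqref{sol space}, referencing the constant $c$ of \eqref{LAf1 lower}, presumably a uniform lower bound coming from the boundary profile) this is $\ge c_1$. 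The subtlety is that Proposition~\ref{lowerL} requires the \emph{a priori} assumption $Lf \ge c_1$ on the input $f$ (to derive \eqref{f coercivity assumption}'s analogue), which we do have since $f \in \mathcal{A}$; so the logic is: $f \in \mathcal{A}$ $\Rightarrow$ $Af$ obeys \eqref{f coercivity assumption} and the mass bound $\Rightarrow$ Proposition~\ref{lowerL} applies to $Af$ $\Rightarrow$ $L(Af) \ge c_1$.

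Fourth, the two norm bounds $\|Af\| \le a_1$ and $\|Af\|_{-1} \le a_2$. For $\|Af\|$: bound $\operatorname{ess\,sup}_{x_1} |Af(x_1,p)|$ by splitting into the four terms. The boundary terms contribute at most $\|f_{LR}\| \le M$ each after integrating against $(p^0)^{1/2}\varphi(p)$, using that the integrating factors are $\le 1$. For the gain terms, the transport factor $\tfrac{1}{\hat p_1}$ combined with the coercivity $Lf \ge c_1$ and the identity $\tfrac{1}{\hat p_1}\int_z^{x_1} Lf \ge \tfrac{c_1|x_1 - z|}{|\hat p_1|}$ gives, after the elementary bound $\frac{1}{|\hat p_1|}\int_0^{x_1} e^{-c_1|x_1-z|/|\hat p_1|}\,dz \le \tfrac{1}{c_1}$, the pointwise estimate $(Af)_2 + (Af)_4 \lesssim \tfrac{1}{c_1} \sup_z \|Q^+(f,f)(z,p)\|$... more precisely one keeps $\|\cdot\|_{L^\infty_{x_1}}$ inside. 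Then Proposition~\ref{lemma1} controls the $(-\Delta_p)^{-1}$-weighted integral of $Q^+$ by $\|f\|^2$, giving $\|Af\|_{-1}$-type control, while the pointwise Lemma~\ref{lemma.pointwise.gain of weight} gives $\int \varphi \|Q^+(f,f)\| \lesssim k^{-1}\|f\|\|f\|_{-1} \le k^{-1} a_1 a_2 \le a_1/C_\ell$ after absorbing the extra $(p^0)^{1/2}$ weight into the analysis (this is where $k \ge a_1 a_2$ is used). Collecting, $\|Af\| \le 2M + \tfrac{2}{C_\ell}(\text{gain contribution}) \le a_1$ by the choice $a_1 \ge 2M + 2/C_\ell$; similarly $\|Af\|_{-1} \le 2M + \tfrac{2 a_1^2}{C_\ell} \le a_2$ using Proposition~\ref{lemma1} for the $|p-a|^{-1}$-weighted gain term. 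I expect the main obstacle to be the bookkeeping in the fourth step: correctly tracking the $(p^0)^{1/2}$ weight through the transport-factor integration (the $1/\hat p_1$ degeneracy as $p_1 \to 0$ must be absorbed by coercivity, not by the weight) and matching the constants so that the gain contributions land below $a_1/2$ and $a_2/2$ respectively — this is precisely the role of the smallness factor $k^{-1}$ and the specific inequalities imposed on $a_1, a_2, k$ right after \eqref{sol space}.
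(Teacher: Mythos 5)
Your steps on non-negativity, the pointwise lower bound \eqref{f coercivity assumption} for $Af$, and the norm bounds $\|Af\|\le a_1$, $\|Af\|_{-1}\le a_2$ follow the paper's route (drop the gain terms and use $Lf\ge c_1$ for the lower bound; use $Lf\ge C_l(p^0)^{1/2}$ from Proposition~\ref{lowerL} applied to $f\in\mathcal{A}$ so that the factor $\frac{1}{\hat p_1}\int_0^{x_1}e^{-C_l(p^0)^{1/2}(x_1-z)/\hat p_1}\,dz\le \frac{1}{C_l(p^0)^{1/2}}$ cancels the $(p^0)^{1/2}$ weight, then Lemma~\ref{lemma.pointwise.gain of weight} with $k\ge a_1a_2$ and Proposition~\ref{lemma1}). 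But your treatment of the key property $L(Af)\ge c_1$ is circular. Proposition~\ref{lowerL} has \emph{two} hypotheses on its input: the coercivity assumption \eqref{f coercivity assumption} \emph{and} the bound \eqref{lower assumption}, which contains $Lf\ge c_1$ itself; in the paper's proof of that proposition the constant lower bound $Lf\ge c_1$ is used directly (in the $A_1,A_2$ combination step) to absorb the negative term $-c_0M$ coming from \eqref{Lf lower 2} — it is not merely an auxiliary device to obtain \eqref{f coercivity assumption}. Hence you cannot apply Proposition~\ref{lowerL} to $Af$ before knowing $L(Af)\ge c_1$, which is exactly what you are trying to prove. Moreover, even if the conclusion $L(Af)\ge C_\ell(p^0)^{1/2}$ were available, your final step ``$C_\ell\ge c_1$ by the choice $c_1=\frac{cm}{2}$'' is unsubstantiated: $C_\ell$ is the (possibly small) constant $\frac{A_2c_0\bar c}{A_1+A_2}$ produced by that proposition, and nothing ties it from below to $c_1$.

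The paper closes this step by a separate, direct argument that your proposal is missing: one computes $L((Af)_1)$ (and $L((Af)_3)$), restricts the $q$-integration to a truncated cone $Q_{r_1,r_2}=\{q_1\ge r_1,\ r_1\le|q|\le r_2\}$, and bounds the exponent $\frac{1}{\hat q_1}\int_0^{x_1}Lf(z,q)\,dz$ from \emph{above} by $C\,c_0k^{-1}r_1^{-1}(1+r_2)^2a_1$, exploiting the exponential weight in $\|f\|\le a_1$ via $e^{-k\bar q^0}\lesssim (1+k\bar q^0)^{-1}$. The boundary coercivity $\inf_p Lf_L\ge m$ from \eqref{boundary condition} guarantees that the truncated integral of $f_L$ retains at least $\frac m2$ for suitable $r_1,r_2$ (inequality \eqref{Q12fminus}), and then $k$ is taken large enough that the damping factor is $\ge\frac12$ (inequality \eqref{k sufficiently large}). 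This yields $L((Af)_1),L((Af)_3)\ge \frac{cm}{4}$ with a uniform $c$, and the definition $c_1=\frac{cm}{2}$ is made precisely so that $L(Af)\ge c_1$ closes consistently. Without this (or an equivalent direct lower bound on $L(Af)$ using only the boundary data and the already-proved pointwise bound on $Af$, with the constant dependence on $c_1$ tracked), the self-map property is not established.
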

\begin{proof}We check that if $f\in \mathcal{A}$ then $Af\in\mathcal{A}.$ Suppose $f \in \mathcal{A}.$ 
Note that we easily have $Af\ge 0$ since each term in \eqref{def.A} is non-negative. Also, we have 
\begin{equation}
    \notag
        (Af)(x_1,p)\ge 1_{p_1>0}f_L(p)\exp\left(-\frac{c_1 x_1}{\hat{p}_1}\right)
        +1_{p_1<0}f_R(p)\exp\left(\frac{(1-x_1)c_1}{\hat{p}_1}\right),
    \end{equation}using the definition of $A$ in \eqref{def.A} and the fact that $\mathcal{L}f\ge c_1,$ and that $Q^+(f,f)$ is non-negative. 
For the rest of the required estimates, we only check the first two terms $(Af)_1$ and $(Af)_2$ of \eqref{def.A} since the other terms, $(Af)_3$ and $(Af)_4$, can be treated similarly if we make a change of variables $(x_1,p)\mapsto (\bar{x}_1,\bar{p})\eqdef (1-x_1,-p)$.

\vspace{5mm}

\noindent\textbf{1. Proof for $\mathcal{L}(Af) \ge c_1$.} We show that $c_1$ can be chosen such that $\mathcal{L}((Af)_i)\ge \frac{c_1}{2},$ for $i=1,3$. We recall the definition of the set $Q_{r_1,r_2}\in \rth$ introduced in Section \ref{sec.boundary} for some $0<r_1<r_2<\infty$:
$$Q_{r_1,r_2}\eqdef \{q\in\rth: q_1\ge r_1\textup{ and } r_1\le |q|\le r_2\}. $$ Then note that 
\begin{align*}
    \mathcal{L}((Af)_1)(x_1,p)&= \int_\rth\int_{\mathbb{S}^2}v_{\textup{\o}} \sigma(g,\theta)(Af)_1(x_1,q)d\omega dq\\
    &=\int_\rth dq\int_{\mathbb{S}^2}d\omega\ v_{\textup{\o}} \sigma(g,\theta)1_{\{q_1>0\}}f_L(q)\exp\left(-\frac{1}{\hat{q}_1}\int_{0}^{x_1} \mathcal{L}f(z,q)dz\right)\\
    &=\int_\rth dq\int_{\mathbb{S}^2}d\omega\ v_{\textup{\o}} \sigma(g(p^\mu,q^\mu),\theta)1_{\{q_1>0\}}f_L(q)\\&\qquad\times \exp\left(-\frac{1}{\hat{q}_1}\int_{0}^{x_1} \int_\rth\int_{\mathbb{S}^2}v_{\textup{\o}} \sigma(g(q^\mu,\bar{q}^\mu),\theta)f(z,\bar{q})d\omega  d\bar{q}dz\right).
    \end{align*}
    Note that if we define $\bar{q}^0=\sqrt{1+|\bar{q}|^2},$ we observe that
    \begin{align*}
&\int_\rth\int_{\mathbb{S}^2}v_{\textup{\o}} \sigma(g(q^\mu,\bar{q}^\mu),\theta)f(z,\bar{q})d\omega  d\bar{q}
        \lesssim  c_0\int_\rth |q-\bar{q}|f(z,\bar{q})d\bar{q}\\*
          &\lesssim  c_0\int_\rth (|q|+|\bar{q}|)e^{-k\bar{q}^0}e^{k\bar{q}^0}f(z,\bar{q})d\bar{q}
          \lesssim  c_0\int_\rth \frac{|q|+|\bar{q}|}{1+k\bar{q}^0}\varphi(\bar{q})f(z,\bar{q})d\bar{q}\\*
           &\lesssim  c_0k^{-1} (1+r_2)\int_\rth \varphi(\bar{q})f(z,\bar{q})d\bar{q}\approx c_0k^{-1} (1+r_2)\|f\|
      \lesssim c_0k^{-1}(1+r_2)a_1,
    \end{align*}for $q\in Q_{r_1,r_2}$ using $v_{\textup{\o}} g(q^\mu,\bar{q}^\mu)\lesssim g(q^\mu,\bar{q}^\mu)\lesssim |q-\bar{q}|$ and then using $|q-\bar{q}|\le |q|+|\bar{q}|$ and $e^{-z}\lesssim \frac{1}{1+z}$. 
    Therefore, we have
    \begin{align*}
   & \mathcal{L}((Af)_1)(x_1,p) \\*
   & \gtrsim \int_{Q_{r_1,r_2}} dq\int_{\mathbb{S}^2}d\omega\ v_{\textup{\o}} \sigma(g,\theta)f_L(q)\exp\left(-\frac{1}{\hat{q}_1}\int_0^{x_1}c_0k^{-1}(1+r_2)a_1 dz\right)\\*
   & \gtrsim e^{-c_0k^{-1}r_1^{-1}(1+r_2)^2a_1 }\int_{Q_{r_1,r_2}} dq\int_{\mathbb{S}^2}d\omega\ v_{\textup{\o}} \sigma(g,\theta)f_L(q)\\
    & \approx e^{-c_0k^{-1}r_1^{-1}(1+r_2)^2a_1 }\int_{Q_{r_1,r_2}} dq\  c_0\frac{g^2\sqrt{s}}{p^0q^0}f_L(q)\\
     & \gtrsim e^{-c_0k^{-1}r_1^{-1}(1+r_2)^2a_1 }\int_{Q_{r_1,r_2}} dq\  c_0\frac{|p-q|^2}{(p^0q^0)^2}f_L(q),
    \end{align*}using Lemma \ref{gINEQ}, $s\ge 4$, $\hat{q}_1^{-1}\lesssim r_1^{-1}(1+r_2)$ on $Q_{r_1,r_2}$, and $x_1\in [0,1]$. By Lemma \ref{lemma m}, we have
    $$
    \mathcal{L}((Af)_1)(x_1,p) \gtrsim e^{-c_0k^{-1}r_1^{-1}(1+r_2)^2a_1 } c_0 m.$$
    Then we choose $k$ sufficiently large such that 
    \begin{equation}\label{k sufficiently large}
        e^{-c_0k^{-1}r_1^{-1}(1+r_2)^2a_1 } \ge \frac{1}{4}.
    \end{equation} Then we can conclude that for some uniform constant $c>0,$ we have 
    \begin{equation}
        \label{LAf1 lower}
 \mathcal{L}((Af)_1)(x_1,p)\ge \frac{cm}{4},    \end{equation} for any $x_1,p$. Similarly, one can show that $\mathcal{L}((Af)_3)(x_1,p)\ge \frac{cm}{4}. $ If $c_1$ has been chosen such that $c_1 = \frac{cm}{2}$ for the given $m$ in the boundary condition \eqref{boundary condition}, we can here conclude the proof for $\mathcal{L}(Af)(x_1,p)\ge c_1.$

\vspace{5mm}

\noindent\textbf{2. Proof for $\|Af\|\le a_1.$}
Note that we have $\mathcal{L}f\ge C_l (p^0)^{1/2}$ via Proposition \ref{lowerL} and hence 
\begin{align*}
    \notag
&\|(Af)_1\|=\int_\rth dp\ (p^0)^{1/2}\varphi(p) 1_{\{p_1>0\}}f_L(p)\exp\left(-\frac{1}{\hat{p}_1}\int_{0}^{x_1} \mathcal{L}f(z,p)dz\right)\\
&\le \int_\rth dp\ (p^0)^{1/2}\varphi(p)1_{\{p_1>0\}}f_L(p)\exp\left(-\frac{1}{\hat{p}_1}C_l(p^0)^{1/2}x_1\right)\le \|f_{LR}\|\le M.
\end{align*}
On the other hand, note that we have
\begin{align*}
    \notag
&\|(Af)_2\|=\int_\rth dp\ (p^0)^{1/2}\varphi(p) 1_{\{p_1>0\}}\frac{1}{\hat{p}_1}\\*&\qquad\times\int_0^{x_1}\exp\left(-\frac{1}{\hat{p}_1}\int_z^{x_1} \mathcal{L}f(z',p)dz'\right) Q^+(f,f)(z,p)dz\\
&\le \int_\rth dp\ (p^0)^{1/2}\varphi(p)1_{\{p_1>0\}}\frac{1}{\hat{p}_1}\int_0^{x_1}\exp\left(-\frac{1}{\hat{p}_1}C_l(p^0)^{1/2} (x_1-z)\right) Q^+(f,f)(z,p)dz\\
&\le \int_\rth dp\ (p^0)^{1/2}\varphi(p)1_{\{p_1>0\}}\frac{1}{\hat{p}_1}\sup_{z\in [0,1]}|Q^+(f,f)(z,p)|\\*&\qquad\times\int_0^{x_1}\exp\left(-\frac{1}{\hat{p}_1}C_l(p^0)^{1/2} (x_1-z)\right) dz\\
&\le \int_\rth dp\ (p^0)^{1/2}\varphi(p)1_{\{p_1>0\}}\sup_{z\in [0,1]}|Q^+(f,f)(z,p)|\frac{1}{C_l (p^0)^{1/2}}\\*&\qquad\times \left(1-\exp\left(-\frac{1}{\hat{p}_1}C_l(p^0)^{1/2} x_1\right) \right)\\
&\le\frac{1}{C_l } \int_\rth dp\ \varphi(p)\sup_{z\in [0,1]}|Q^+(f,f)(z,p)|
\le \frac{1}{kC_l}\|f\|_{-1} \|f\|\le  \frac{a_1a_2}{kC_l},
\end{align*}using Lemma \ref{lemma.pointwise.gain of weight}, that $\|f\|\le a_1$, $\|f\|_{-1}\le a_2,$ and that $\mathcal{L}f\ge C_l (p^0)^{1/2}$ via Proposition \ref{lowerL}. Let $k$ be sufficiently large such that $k\ge a_1a_2.$ 

Therefore, given $M$ and $C_l$, if $a_1$ has been chosen such that $a_1\ge 2M+ 2\frac{1}{C_l}$ for a sufficiently large $k$, we obtain the upper bound that $\|Af\|\le a_1$. In addition, as a direct consequence, we also have 
$$Af\in  L^\infty_{x_1}\left([0,1];L^1_p\!\left(\mathbb{R}^3,\,(p^0)^{1/2}\,dp\right)\right),$$ by the inequalities \eqref{function space ineq}.

\vspace{5mm}

\noindent \textbf{3. Proof for $\|Af\|_{-1}\le a_2.$} Note that we have $\mathcal{L}f\ge C_l (p^0)^{1/2}$ via Proposition \ref{lowerL} and hence
\begin{align*}
    \notag
&\|(Af)_1\|_{-1}=\sup_{a\in \rth}\int_\rth dp\ \frac{(p^0)^{1/2}\varphi(p) }{|p-a|}1_{\{p_1>0\}}f_L(p)\exp\left(-\frac{1}{\hat{p}_1}\int_{0}^{x_1} \mathcal{L}f(z,p)dz\right)\\*
&\le \sup_{a\in \rth}\int_\rth dp\ \frac{(p^0)^{1/2}\varphi(p) }{|p-a|}1_{\{p_1>0\}}f_L(p)\exp\left(-\frac{1}{\hat{p}_1}\int_{0}^{x_1} C_l(p^0)^{1/2}dz\right)\\*
&\le \sup_{a\in \rth}\int_\rth dp\ \frac{(p^0)^{1/2}\varphi(p) }{|p-a|}1_{\{p_1>0\}}f_L(p)\exp\left(-\frac{1}{\hat{p}_1} C_l(p^0)^{1/2}x_1\right)
\le  \|f_{LR}\|_{-1}\le M.
\end{align*}
On the other hand, note that we have
\begin{align*}    \notag
&\|(Af)_2\|_{-1}=\sup_{a\in \rth}\int_\rth dp\ \frac{(p^0)^{1/2}\varphi(p) }{|p-a|}1_{\{p_1>0\}}\frac{1}{\hat{p}_1}\int_0^{x_1}\exp\left(-\frac{1}{\hat{p}_1}\int_z^{x_1} \mathcal{L}f(z',p)dz'\right)\\&\qquad\qquad\times Q^+(f,f)(z,p)dz\\&
\le \sup_{a\in \rth}\int_\rth dp\ \frac{(p^0)^{1/2}\varphi(p) }{|p-a|}1_{\{p_1>0\}}\frac{1}{\hat{p}_1}\int_0^{x_1}\exp\left(-\frac{1}{\hat{p}_1}C_l(p^0)^{1/2} (x_1-z)\right) \\&\qquad\qquad\times Q^+(f,f)(z,p)dz\\
&\le \sup_{a\in \rth}\int_\rth dp\ \frac{(p^0)^{1/2}\varphi(p) }{|p-a|}1_{\{p_1>0\}}\frac{1}{C_l (p^0)^{1/2}}\left(1-\exp\left(-\frac{1}{\hat{p}_1}C_l(p^0)^{1/2} x_1\right) \right)\\&\qquad\qquad\times\|Q^+(f,f)(\cdot,p)\|_{L^\infty_{x_1}}\\
&\le \frac{1}{C_l}\sup_{a\in \rth}\int_\rth dp\ \frac{\varphi(p) }{|p-a|} \|Q^+(f,f)(\cdot,p)\|_{L^\infty_{x_1}}
\le \frac{1}{C_l}\|f\|^2 \le  \frac{a_1^2}{C_l},
\end{align*}using Proposition \ref{lemma1}, that $\|f\|\le a_1$, and that $\mathcal{L}f\ge C_l (p^0)^{1/2}$ via Proposition \ref{lowerL}. 

Altogether, given $a_1$, $M$ and $C_l$, if $a_2$ has been chosen such that  $a_2\ge  2M+2\frac{a_1^2}{C_l},$ we obtain that $\|Af\|_{-1}\le a_2$.

\end{proof}
\section{Existence and Uniqueness of Steady Solutions}
\label{sec.5}

\subsection{Contraction of the solution operator $A$}
 
 In this section, we prove that $A$ is continuous in the following sense:
\begin{lemma}[Contraction property of $A$]\label{contraction lemma}Suppose $f,h\in\mathcal{A}$. $f_{LR}$ further satisfies the conditions in \eqref{boundary condition}. Suppose $f$ and $h$ coincide at the incoming boundary as $f|_{\gamma_-}=h|_{\gamma_-}=f_{LR}|_{\gamma_-}$.  Then we have 
$$\|Af-Ah\|\lesssim k^{-1} \|f-h\|, $$for $k>0$ where the norm $\|\cdot\|$ is defined as 
\begin{equation}
    \label{norm.||}
    \|f\|\eqdef \int_\rth (p^0)^{1/2}\varphi(p)\|f(\cdot,p)\|_{L^\infty_{x_1}}dp.
\end{equation}
\end{lemma}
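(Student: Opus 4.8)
The plan is to compare the two mild representations term by term through \eqref{def.A}: writing $Af-Ah=\sum_{i=1}^{4}\big((Af)_i-(Ah)_i\big)$, I first observe that the change of variables $(x_1,p)\mapsto(1-x_1,-p)$ maps the estimates for $(Af)_3-(Ah)_3$ and $(Af)_4-(Ah)_4$ onto those for $(Af)_1-(Ah)_1$ and $(Af)_2-(Ah)_2$, so it suffices to bound the latter two. Throughout, since $f,h\in\mathcal{A}$, Proposition~\ref{lowerL} applies and gives $Lf(z,p),Lh(z,p)\ge C_\ell (p^0)^{1/2}$, and I will use $\|f\|,\|h\|\le a_1$, $\|f\|_{-1},\|h\|_{-1}\le a_2$. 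The elementary inequality I rely on repeatedly is $|e^{-A}-e^{-B}|\le |A-B|\,e^{-\min(A,B)}$ for $A,B\ge 0$; keeping the decaying factor $e^{-\min(A,B)}$ is essential, since the cruder $|e^{-A}-e^{-B}|\le|A-B|$ would leave an uncontrolled power of $1/\hat p_1$.

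\textbf{Boundary term.} Since the inflow datum $f_L$ is common to $(Af)_1$ and $(Ah)_1$, only the integrating factors differ, so $(Af)_1-(Ah)_1=\mathbf 1_{\{p_1>0\}}f_L(p)\big(e^{-A}-e^{-B}\big)$ with $A=\frac{1}{\hat p_1}\int_0^{x_1}Lf\,dz$, $B=\frac{1}{\hat p_1}\int_0^{x_1}Lh\,dz$. Using the inequality above with $\min(A,B)\ge C_\ell(p^0)^{1/2}x_1/\hat p_1$ and $|A-B|\le\frac{1}{\hat p_1}\int_0^{x_1}|Lf-Lh|(z,p)\,dz\lesssim \frac{x_1(p^0)^{1/2}}{\hat p_1}e^{-k/\sqrt2}\|f-h\|$ from Lemma~\ref{L.cont}, and then $t\,e^{-C_\ell t}\lesssim C_\ell^{-1}$ with $t=(p^0)^{1/2}x_1/\hat p_1$, I obtain $|(Af)_1-(Ah)_1|\lesssim C_\ell^{-1}f_L(p)\,e^{-k/\sqrt2}\|f-h\|$. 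Multiplying by $(p^0)^{1/2}\varphi(p)$, integrating, and using $\int_{p_1>0}(p^0)^{1/2}\varphi(p)f_L(p)\,dp\le\|f_{LR}\|\le M$ from \eqref{boundary condition} gives $\|(Af)_1-(Ah)_1\|\lesssim C_\ell^{-1}M\,e^{-k/\sqrt2}\|f-h\|$.

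\textbf{Gain term.} I split $(Af)_2-(Ah)_2=\mathrm I+\mathrm{II}$, where $\mathrm I$ keeps the integrating factor built from $Lf$ and carries $Q^+(f,f)-Q^+(h,h)$, while $\mathrm{II}$ carries the difference of the integrating factors against $Q^+(h,h)$. For $\mathrm I$, bilinearity gives $Q^+(f,f)-Q^+(h,h)=Q^+(f-h,f)+Q^+(h,f-h)$; bounding the integrating factor by $e^{-C_\ell(p^0)^{1/2}(x_1-z)/\hat p_1}$, pulling out $\sup_z$ of the nonnegative gain terms, and using $\frac{1}{\hat p_1}\int_0^{x_1}e^{-C_\ell(p^0)^{1/2}(x_1-z)/\hat p_1}\,dz\le (C_\ell(p^0)^{1/2})^{-1}$, I get $|\mathrm I|\lesssim C_\ell^{-1}(p^0)^{-1/2}\sup_z\big(Q^+(f-h,f)+Q^+(h,f-h)\big)(z,p)$. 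After multiplying by $(p^0)^{1/2}\varphi(p)$ and integrating, Corollary~\ref{cor.pointwise.gain of weight} applied to $Q^+(f-h,f)$ and Lemma~\ref{lemma.pointwise.gain of weight} applied to $Q^+(h,f-h)$ yield $\|\mathrm I\|\lesssim C_\ell^{-1}k^{-1}\big(\|f\|_{-1}+\|h\|_{-1}\big)\|f-h\|\lesssim C_\ell^{-1}a_2 k^{-1}\|f-h\|$. For $\mathrm{II}$, I use $|e^{-A}-e^{-B}|\le|A-B|e^{-\min(A,B)}$ with $\min(A,B)\ge C_\ell(p^0)^{1/2}(x_1-z)/\hat p_1$ and $|A-B|\lesssim\frac{(x_1-z)(p^0)^{1/2}}{\hat p_1}e^{-k/\sqrt2}\|f-h\|$ from Lemma~\ref{L.cont}; pulling out $\sup_z Q^+(h,h)(z,p)$ and computing $\frac{(p^0)^{1/2}}{\hat p_1^2}\int_0^{x_1}(x_1-z)e^{-C_\ell(p^0)^{1/2}(x_1-z)/\hat p_1}\,dz\le C_\ell^{-2}(p^0)^{-1/2}$ (via $\int_0^\infty u\,e^{-cu}\,du=c^{-2}$) gives $|\mathrm{II}|\lesssim C_\ell^{-2}(p^0)^{-1/2}e^{-k/\sqrt2}\|f-h\|\sup_z Q^+(h,h)(z,p)$; multiplying by $(p^0)^{1/2}\varphi(p)$, integrating, and using Lemma~\ref{lemma.pointwise.gain of weight} once more gives $\|\mathrm{II}\|\lesssim C_\ell^{-2}e^{-k/\sqrt2}k^{-1}\|h\|_{-1}\|h\|\,\|f-h\|\lesssim C_\ell^{-2}a_1a_2\,e^{-k/\sqrt2}k^{-1}\|f-h\|$.

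\textbf{Conclusion and main obstacle.} Adding the four contributions together with their mirror images under $(x_1,p)\mapsto(1-x_1,-p)$, I arrive at
\[
\|Af-Ah\|\lesssim\Big(C_\ell^{-1}M\,e^{-k/\sqrt2}+C_\ell^{-1}a_2\,k^{-1}+C_\ell^{-2}a_1a_2\,e^{-k/\sqrt2}k^{-1}\Big)\|f-h\| .
\]
Since $C_\ell,M,a_1,a_2$ are fixed and $e^{-k/\sqrt2}\le k^{-1}$ for $k$ large, the right-hand side is $\lesssim k^{-1}\|f-h\|$, which is the assertion (and, for $k$ large enough, makes $A$ a genuine contraction). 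The main obstacle throughout is the degeneracy of the transport coefficient $1/\hat p_1$ as $p_1\to0$: in every term it must be absorbed by the competition between the integrating exponential $e^{-C_\ell(p^0)^{1/2}\,\cdot\,/\hat p_1}$ and the polynomial weight $(p^0)^{1/2}$, through one-variable bounds such as $t\,e^{-ct}\lesssim 1$ and $\int_0^\infty u\,e^{-cu}\,du=c^{-2}$. This is exactly why one must retain the factor $e^{-\min(A,B)}$ when estimating differences of exponentials, and why the decisive smallness factors $k^{-1}$ and $e^{-k/\sqrt2}$ have to be drawn from Lemma~\ref{lemma.pointwise.gain of weight}, Corollary~\ref{cor.pointwise.gain of weight}, and Lemma~\ref{L.cont} rather than from the transport structure alone.
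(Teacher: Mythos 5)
Your proposal is correct and follows essentially the same route as the paper: the same decomposition of $(Af)_2-(Ah)_2$ into a $Q^+$-difference part and an integrating-factor-difference part, the same use of the coercivity $Lf,Lh\ge C_\ell(p^0)^{1/2}$ from Proposition~\ref{lowerL} to absorb the $1/\hat p_1$ degeneracy, Lemma~\ref{L.cont} with the $e^{-k/\sqrt2}$ prefactor for exponential differences, and Lemma~\ref{lemma.pointwise.gain of weight} together with Corollary~\ref{cor.pointwise.gain of weight} to extract the $k^{-1}$ smallness (your bilinear splitting $Q^+(f-h,f)+Q^+(h,f-h)$ versus the paper's symmetric $\tfrac12\{Q^+(f-h,f+h)+Q^+(f+h,f-h)\}$ is an immaterial variation). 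The boundary-term estimate via $|e^{-A}-e^{-B}|\le|A-B|e^{-\min(A,B)}$ and $t\,e^{-ct}\lesssim c^{-1}$ matches the paper's argument as well, so no gaps.
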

\begin{proof}
    Since $f$ and $h$ coincide on the incoming boundary $\gamma_-,$ the differences $(Af)_1-(Ah)_1$ and $(Af)_3-(Ah)_3$ of \eqref{def.A} are cancelled out in the $Af-Ah$ representation. We now look at the difference $(Af)_2-(Ah)_2$ of \eqref{def.A}. The other difference $(Af)_4-(Ah)_4$ is similar and we omit the estimate. 
    
    \noindent \textbf{1. Estimate for the difference $(Af)_2-(Ah)_2$. } Note that
    \begin{align*}
     &   (Af)_2(x_1,p)-(Ah)_2(x_1,p)\\
       & =1_{\{p_1>0\}}\frac{1}{\hat{p}_1}\int_0^{x_1}\exp\left(-\frac{1}{\hat{p}_1}\int_z^{x_1} \mathcal{L}f(z',p)dz'\right)\bigg[Q^+(f,f)(z,p)-Q^+(h,h)(z,p)\bigg]dz\\
       & +1_{\{p_1>0\}}\frac{1}{\hat{p}_1}\int_0^{x_1} \left[e^{\left(-\frac{1}{\hat{p}_1}\int_z^{x_1} \mathcal{L}f(z',p)dz'\right)}-e^{\left(-\frac{1}{\hat{p}_1}\int_z^{x_1} \mathcal{L}h(z',p)dz'\right)}\right]Q^+(h,h)(z,p)dz.
    \end{align*}Note that by the mean-value theorem we have $|e^{-x}-e^{-y}|\le e^{-\min\{x,y\}} |x-y|$. Also by Lemma \ref{L.cont}, we have
     \begin{equation}\begin{split}\label{arg1}
     &  | (Af)_2(x_1,p)-(Ah)_2(x_1,p)|\\
      &  \le 1_{\{p_1>0\}}\frac{1}{\hat{p}_1}\int_0^{x_1}\exp\left(-\frac{1}{\hat{p}_1}\int_z^{x_1} \mathcal{L}f(z',p)dz'\right)\bigg|Q^+(f,f)(z,p)-Q^+(h,h)(z,p)\bigg|dz\\
      &  +1_{\{p_1>0\}}\frac{1}{\hat{p}^2_1}\int_0^{x_1}dz\ e^{\left(-\frac{1}{\hat{p}_1}\min\{\int_z^{x_1} \mathcal{L}f(z',p)dz',\int_z^{x_1} \mathcal{L}h(z',p)dz'\}\right)}\\&\qquad\qquad\times e^{-\frac{k}{\sqrt{2}}}(p^0)^{1/2}\|f-h\|(x_1-z)Q^+(h,h)(z,p)\\
        &\eqdef I_1(x_1,p)+I_2(x_1,p).
    \end{split}\end{equation}

    \noindent\textbf{(i) Estimate for $I_2$.} 
    Since $\mathcal{L}f\ge C_l(p^0)^{1/2}$ and $\mathcal{L}h\ge C_l(p^0)^{1/2}$ by Proposition \ref{lowerL}, we have 
    \begin{equation}
        \label{arg2}-\min\left\{\int_z^{x_1} \mathcal{L}f(z',p)dz',\int_z^{x_1} \mathcal{L}h(z',p)dz'\right\}\le -C_l(p^0)^{1/2}(x_1-z).
    \end{equation} Thus, using $0\le z\le x_1\le 1,$ we have \begin{align*}
     \varphi(p)  I_2(x_1,p)&=  \varphi(p)1_{\{p_1>0\}}\frac{1}{\hat{p}^2_1}\int_0^{x_1}dz\ \exp\left(-\frac{C_l(p^0)^{1/2}}{\hat{p}_1}(x_1-z)\right)\\*&\qquad\qquad\times e^{-\frac{k}{\sqrt{2}}}(p^0)^{1/2}\|f-h\|(x_1-z)Q^+(h,h)(z,p)\\
   & \le \varphi(p)1_{\{p_1>0\}}\frac{1}{\hat{p}^2_1}(\sup_z |Q^+(h,h)(z,p)|)\int_0^\infty \frac{\hat{p}_1 dz'}{C_l(p^0)^{1/2}}\ \exp\left(-z'\right)\\&\qquad\qquad\times e^{-\frac{k}{\sqrt{2}}}(p^0)^{1/2}\|f-h\|\frac{\hat{p}_1}{C_l(p^0)^{1/2}}z'\\
       & \lesssim \varphi(p) 1_{\{p_1>0\}}\sup_{z}|Q^+(h,h)(z,p)|\frac{e^{-\frac{k}{\sqrt{2}}}}{C_l^2}(p^0)^{-1/2}\|f-h\|,
    \end{align*}where we made a change of variables $z\mapsto z'\eqdef -\frac{C_l(p^0)^{1/2}}{\hat{p}_1}(x_1-z).$ 
    Thus, \begin{align*}
        \|I_2\|&\lesssim \frac{e^{-\frac{k}{\sqrt{2}}}}{C_l^2}\|f-h\|\int_\rth dp\  \varphi(p) 1_{\{p_1>0\}}\sup_{z}|Q^+(h,h)(z,p)|\\
        &\lesssim \frac{e^{-\frac{k}{\sqrt{2}}}}{kC_l^2}\|f-h\|\|h\|_{-1} \|h\|
        \lesssim \frac{e^{-\frac{k}{\sqrt{2}}}}{k} \frac{a_1a_2}{C_l^2}\|f-h\|,
    \end{align*}using that $h\in \mathcal{A}$ and Lemma \ref{lemma.pointwise.gain of weight}.

\noindent\textbf{(ii) Estimate for $I_1$.} 
We now look at the difference of $Q^+(f,f)-Q^+(h,h)$ appearing in the difference $I_1$. Since $Q^+$ is bi-linear, we first observe that for each $p$,
$$\|Q^+(f,f)-Q^+(h,h)\|_{L^\infty_{x_1}}\le \frac{1}{2}\left(\|Q^+(f-h,f+h)\|_{L^\infty_{x_1}}+\|Q^+(f+h,f-h)\|_{L^\infty_{x_1}}\right).$$ Therefore, using $\mathcal{L}f\ge C_l(p^0)^{1/2}$ from Proposition \ref{lowerL}, we note that 
\begin{align*}
 \|I_1\|&=  \sup_{x_1} \int_\rth dp\ (p^0)^{1/2}\varphi(p) |I_1(x_1,p)| \\
  & \le  \sup_{x_1} \int_\rth dp\ (p^0)^{1/2}\varphi(p)1_{\{p_1>0\}}\frac{1}{2\hat{p}_1}\int_0^{x_1}\exp\left(-\frac{1}{\hat{p}_1}C_l(p^0)^{1/2}(x_1-z)\right)\\&\qquad\qquad\times \left(\|Q^+(f-h,f+h)\|_{L^\infty_{x_1}}+\|Q^+(f+h,f-h)\|_{L^\infty_{x_1}}\right)dz\\
 &  =\sup_{x_1} \int_\rth dp\ \varphi(p)1_{\{p_1>0\}}\frac{1}{2C_l}\left(1-e^{-\frac{1}{\hat{p}_1}C_l(p^0)^{1/2}x_1}\right) \\&\qquad\qquad\times \left(\|Q^+(f-h,f+h)\|_{L^\infty_{x_1}}+\|Q^+(f+h,f-h)\|_{L^\infty_{x_1}}\right)\\&\eqdef J_1+J_2.
\end{align*}
By Corollary \ref{cor.pointwise.gain of weight}, we have$$J_1\lesssim \frac{1}{2kC_l}\|(f+h)\|_{-1}\|f-h\|.$$
By Lemma \ref{lemma.pointwise.gain of weight}, we have
$$J_2\lesssim \frac{1}{2kC_l}\|(f+h)\|_{-1}\|f-h\|.$$Thus, we have 
$$\|I_1\|\lesssim \frac{2a_2}{kC_l}\|f-h\|,$$ under the assumption that $f,h\in \mathcal{A}$ so that  $\|f+h\|_{-1}\le 2a_2$.

\noindent\textbf{2. Estimate for the difference $(Af)_1-(Ah)_1$. } Since $f$ and $h$ have the same boundary profile $f|_{\gamma_-}=h|_{\gamma_-}=f_{LR}|_{\gamma_-}$, the difference $(Af)_1-(Ah)_1$ is written as
\begin{multline*}
    (Af)_1(x_1,p)-(Ah)_1(x_1,p)\\*=1_{\{p_1>0\}}f_L(p)\left(\exp\left(-\frac{1}{\hat{p}_1}\int_{0}^{x_1} \mathcal{L}f(z,p)dz\right)-\exp\left(-\frac{1}{\hat{p}_1}\int_{0}^{x_1} \mathcal{L}h(z,p)dz\right)\right).
\end{multline*}Using Proposition \ref{lowerL}, Lemma \ref{L.cont}, and the same argument of \eqref{arg1}--\eqref{arg2}, we have
\begin{align*}
    &|(Af)_1(x_1,p)-(Ah)_1(x_1,p)|\\*&\le 1_{\{p_1>0\}}f_L(p)\frac{1}{\hat{p}_1}e^{\left(-\frac{1}{\hat{p}_1}\min\{\int_{0}^{x_1} \mathcal{L}f(z,p)dz,\int_{0}^{x_1} \mathcal{L}h(z,p)dz\}\right)}e^{-\frac{k}{\sqrt{2}}}(p^0)^{1/2}\|f-h\|x_1\\
    &\le 1_{\{p_1>0\}}f_L(p)\frac{1}{\hat{p}_1}e^{\left(-\frac{1}{\hat{p}_1}C_l(p^0)^{1/2}x_1\right)}e^{-\frac{k}{\sqrt{2}}}(p^0)^{1/2}\|f-h\|x_1\\
    &\lesssim  1_{\{p_1>0\}}f_L(p)\frac{1}{C_l}e^{\left(-(1-\epsilon)\frac{1}{\hat{p}_1}C_l(p^0)^{1/2}x_1\right)}e^{-\frac{k}{\sqrt{2}}}\|f-h\|,
\end{align*}using $ze^{-z} \lesssim e^{-(1-\epsilon)z}$ for any sufficiently small $\epsilon>0$ with $z=\frac{C_l(p^0)^{1/2}}{\hat{p}_1}x_1$ in our case. Therefore, we conclude that
\begin{align*}
   & \|(Af)_1-(Ah)_1\|\\*
  &  \lesssim e^{-\frac{k}{\sqrt{2}}}\|f-h\|\int_\rth dp\ (p^0)^{1/2}\varphi(p)1_{\{p_1>0\}}f_L(p)\frac{1}{C_l}\sup_{x_1} e^{\left(-(1-\epsilon)\frac{1}{\hat{p}_1}C_l(p^0)^{1/2}x_1\right)}\\*
    &\lesssim \frac{e^{-\frac{k}{\sqrt{2}}}}{C_l}\|f_{LR}\|\|f-h\|\lesssim \frac{Me^{-\frac{k}{\sqrt{2}}}}{C_l}\|f-h\|.
\end{align*}
Therefore for a sufficiently large $k>0$, the map $A$ is a contraction from $\mathcal{A}$ to itself supported by Lemma \ref{selfmap lemma} on the self-map property of $A$. This completes the proof.
 \end{proof}

\subsection{Existence and uniqueness of a stationary solution}
Using Lemma~\ref{selfmap lemma} and Lemma~\ref{contraction lemma}, we now prove
Theorem~\ref{thm:existence}.

\begin{proof}[Proof of Theorem \ref{thm:existence}] By Lemma \ref{selfmap lemma} and Lemma \ref{contraction lemma} for a sufficiently large $k>0$, we obtain that $A$ is a contraction mapping from $\mathcal{A}$ to itself. Thus, by the Banach fixed-point theorem, we obtain a unique fixed-point $f\in\mathcal{A}$ of the contraction mapping $A$ such that $f=Af$. This completes the proof of the existence of a unique solution to the stationary boundary-value problem. Note that the weighted $L^1_pL^\infty_{x_1}$ norm-boundedness and the $(-\Delta_p)^{-1}$ regularity in Theorem \ref{thm:existence} also follow by the definition of the solution space $\mathcal{A}$ in \eqref{sol space}.
\end{proof}
In addition, assuming that the boundary profile satisfies a hyperplane estimate $\|f_{LR}\|_{\textup{hyp}}<+\infty$, we can further establish that the corresponding solution satisfies the same estimate.
 The detailed proof is presented in the following section.

\section{Propagation of Hyperplane Estimates}
\label{sec.6}

In this section, we prove that if the inflow boundary profile satisfies the
hyperplane bound $\|f_{LR}\|_{\mathrm{hyp}}<\infty$, then the corresponding
steady solution $f$ also satisfies $\|f\|_{\mathrm{hyp}}<\infty$.
The proof relies crucially on the hyperplane estimate for the gain term
$Q^+(f,f)$ established in Proposition~\ref{prop.3.4} below, together with the mild formulation
of the stationary problem.

We emphasize that, unlike
\cite{MR459450,MR2122556,2503.12587}, this hyperplane estimate is not used in the
existence and uniqueness theory developed in Section~5, but is obtained here as
an \emph{a posteriori} regularity property of the steady solution.

\subsection{Hyperplane integral of $Q^+$} We now estimate the integral of the gain term $Q^+$ on a hyperplane.
\begin{proposition}\label{prop.3.4}
Let $\varphi(p) \eqdef e^{k p^0}$ for some $k>0$.  
Then we have
\begin{equation}\label{hyperplane}
\sup_{E}
\int_{E}
(p^0)^{1/2}\,\varphi(p)\,
\|Q^+(f,f)(\cdot,p)\|_{L^\infty_{x_1}}
\, d\sigma_E(p)
\;\lesssim\;
\|f\|^2,
\end{equation}
where the supremum is taken over all two-dimensional planes $E \subset \mathbb{R}^3$ passing through the origin, and $d\sigma_E$ denotes the two-dimensional Lebesgue measure on $E$.
\end{proposition}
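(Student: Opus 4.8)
The plan is to combine three ingredients: a Lorentz (in fact purely spatial) rotation normalizing the hyperplane, a pre-post collisional relabelling that transfers the constraint defining $E$ onto the post-collisional momentum, and an explicit spherical integral whose $1/g$ singularity cancels against the factor $v_{\textup{\o}}g$ coming from the center-of-momentum kernel. First I would reduce to a canonical plane: every two-dimensional plane $E\subset\mathbb{R}^3$ through the origin has a unit normal, and by \eqref{rot.lorentz} there is a spatial rotation $\Lambda$ (a Lorentz transformation fixing the time component) mapping that normal to $e_3$, hence $E$ to $E_0:=\{p_3=0\}$. Since the weight $(p^0)^{1/2}\varphi(p)$ depends only on $|p|$, the surface measure $d\sigma_E$ and the norm $\|\cdot\|$ are rotation invariant, and $Q^+$ is rotation equivariant (the kernel $g\,\sigma_0(\theta)$ and the collision map \eqref{p'} being rotation equivariant), so it suffices to prove \eqref{hyperplane} for $E=E_0$ with a constant independent of $\Lambda$; the supremum over $E$ then follows.

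Next I would write $\int_{E_0}(\cdots)\,d\sigma_{E_0}(p)=\int_{\mathbb{R}^3}\delta(p_3)(\cdots)\,dp$ — rigorously by replacing $\delta(p_3)$ with $(2\varepsilon)^{-1}\mathbf{1}_{\{|p_3|<\varepsilon\}}$ and letting $\varepsilon\to0$ — bound $\|Q^+(f,f)(\cdot,p)\|_{L^\infty_{x_1}}$ by the center-of-momentum form \eqref{Q center of momentum} with $v_{\textup{M}}\sigma(g,\theta)\approx v_{\textup{\o}}g\,\sigma_0(\theta)$, and then perform the pre-post relabelling $(p,q)\to(p',q')$ exactly as in the proof of Proposition~\ref{lemma1}. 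This converts the factor $\delta(p_3)$ into $\delta(p'_3)$, where $p'=p'(p,q,\omega)$ is given by \eqref{p'}, while $v_{\textup{\o}}$, $g$, $s$, and $\theta$ are invariant. Using the energy inequality $p'^0\le p^0+q^0$ I would then extract, uniformly in $\omega$, the bound $(p'^0)^{1/2}\varphi(p')\le 2(p^0q^0)^{1/2}\varphi(p)\varphi(q)$ and pull it out of the $\omega$-integral together with $\|f(\cdot,p)\|_{L^\infty_{x_1}}\|f(\cdot,q)\|_{L^\infty_{x_1}}$.

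The core computation is then the spherical integral $\int_{\mathbb{S}^2}\sigma_0(\theta)\,\delta(p'_3)\,d\omega$. From \eqref{p'} one has $p'_3=\tfrac12(p+q)_3+\tfrac g2\,v\cdot\omega$ with $v=\bigl(I+(\gamma-1)A\otimes A\bigr)e_3=e_3+(\gamma-1)(A\cdot e_3)A$ and $A=(p+q)/|p+q|$, so $\delta(p'_3)=\tfrac2g\,\delta\!\bigl(v\cdot\omega+\tfrac1g(p+q)_3\bigr)$. The elementary identity $\int_{\mathbb{S}^2}\delta(v\cdot\omega+c)\,d\omega=\tfrac{2\pi}{|v|}\,\mathbf{1}_{\{|c|\le|v|\}}$ together with $|v|^2=1+(A\cdot e_3)^2(\gamma^2-1)\ge1$ (since $\gamma\ge1$) gives $\int_{\mathbb{S}^2}\delta(p'_3)\,d\omega\le 4\pi/g$. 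Hence, using $\sigma_0\le M=\tfrac1{4\pi}$ and $v_{\textup{\o}}g=g^2\sqrt s/(p^0q^0)$,
\[
v_{\textup{\o}}g\int_{\mathbb{S}^2}\sigma_0(\theta)\,\delta(p'_3)\,d\omega\ \le\ v_{\textup{\o}}g\cdot\frac{4\pi M}{g}\ =\ v_{\textup{\o}}\ \lesssim\ 1 ,
\]
which is exactly the cancellation that makes the estimate close. Plugging this back, the remaining double integral equals $\bigl(\int_{\mathbb{R}^3}(p^0)^{1/2}\varphi(p)\|f(\cdot,p)\|_{L^\infty_{x_1}}\,dp\bigr)^2=\|f\|^2$, giving \eqref{hyperplane}.

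The hard part will be the spherical computation and its cancellation: isolating the $\omega$-dependence of $p'_3$ correctly from the relativistic formula \eqref{p'} (keeping careful track of the matrix $I+(\gamma-1)A\otimes A$ and verifying $|v|\ge1$), and justifying the distributional manipulations — representing $d\sigma_E$ as $\delta(p_3)\,dp$ and pushing it through the pre-post change of variables — e.g.\ via the $\varepsilon$-regularization followed by dominated/monotone convergence, together with a uniform-in-$\varepsilon$ bound on $\int_{\mathbb{S}^2}(2\varepsilon)^{-1}\mathbf{1}_{\{|p'_3|<\varepsilon\}}\,d\omega$. By contrast, the rotational reduction, the energy inequality $p'^0\le p^0+q^0$, and the $\|f\|^2$ bookkeeping should be routine.
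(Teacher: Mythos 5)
Your proposal is correct and follows essentially the same route as the paper: rotate the hyperplane to $\{p_3=0\}$ via \eqref{rot.lorentz}, write the surface integral with $\delta(p_3)$, perform the pre-post relabelling so the constraint becomes $\delta(p'_3)$, use $p'^0\le p^0+q^0$ for the weights, and evaluate the spherical delta integral so that the resulting $1/g$ cancels against $v_{\textup{\o}}g$. Your direct observation $|v|^2=1+(A\cdot e_3)^2(\gamma^2-1)\ge 1$ is just a slightly cleaner form of the paper's explicit lower bound on $g|Z|$ in \eqref{one over Z estimate}, so the two arguments coincide in substance.
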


\begin{proof}
Let $a_E$ be a 3-dimensional unit vector in $\rth$ that is perpendicular to the plane $E$.
We first write
\begin{align*}
  K_E&\eqdef   \int_E d\sigma_E(p) \hspace{1mm}(p^0)^{1/2}\varphi(p)Q^+(f,f)(x_1,p) \\
  &= \int_\rth dp \hspace{1mm}(p^0)^{1/2}\varphi(p)Q^+(f,f)(x_1,p) \delta( p\cdot a_E)\\&=\int_\rth \frac{dp}{p^0}\int_\rth\frac{dq}{q^0}\int_{\mathbb{S}^2}d\omega\  (p^0)^{1/2}g\sqrt{s} g  \sigma_0(\theta)\varphi(p)f(x_1,p^{\prime})f(x_1,q^{\prime})\delta(p\cdot a_E),
\end{align*}using the definition $v_{\textup{\o}} = \frac{g\sqrt{s}}{p^0q^0}.$ Define a 4-vector $a^\mu\eqdef (0,a_E)$. Then we can write $p\cdot a_E = p^\mu a_\mu. $ Now we consider the momentum rotation within the Lorentz transformation as in \eqref{rot.lorentz} such that $\Lambda a^\mu = (0,0,0,1)^\top.$ Then by the invariance property \eqref{invariance}, we have 
$$p^\mu a_\mu=(\Lambda^{\kappa}_{~\mu}  p^{\mu})\eta_{\kappa\lambda } (\Lambda^{\lambda}_{~\nu}  a^{\nu} )= (\Lambda p^\mu)(\Lambda a_\mu),$$ by abusing the notation and writing $\Lambda p^\kappa = \Lambda^{\kappa}_{~\mu}  p^{\mu}$. Similarly, we have 
$$g^2=g^2(p^\mu,q^\mu)= (p^\mu-q^\mu)(p_\mu-q_\mu)= g^2(\Lambda p^\mu, \Lambda q^\mu),$$ and $$s=s(p^\mu,q^\mu)= -(p^\mu+q^\mu)(p_\mu+q_\mu)= s(\Lambda p^\mu, \Lambda q^\mu).$$ Note that the measure $\frac{dp}{p^0}\frac{dq}{q^0}$ is invariant under this transformation. Lastly, $p^0$ and $q^0$ (and hence $\varphi(p)$) are also invariant under this momentum rotation. Then by denoting 
the unit vector $(0,0,1)^\top$ as 
 $\hat{e}_3$, we have
\begin{equation}\notag
  K_E  =\int_\rth \frac{dp}{p^0}\int_\rth\frac{dq}{q^0}\int_{\mathbb{S}^2}d\omega\ (p^0)^{1/2}g\sqrt{s} g  \sigma_0(\theta)\varphi(p)f(x_1,p^{\prime})f(x_1,q^{\prime})\delta(p\cdot\hat{e}_3),
\end{equation} by relabeling $\Lambda p^\mu$ and $\Lambda q^\mu$ as $p^\mu$ and $q^\mu$.  
Now we apply a pre-post collisional change of the variables $(p^{\mu}, q^{\mu}) \to (p^{\prime\mu}, q^{\prime\mu})$ and use the fact that  $s$ and $g$ are invariant under this transformation. Then we obtain that the integral $K_E$ is now given by
\begin{align}\notag
K_E&\lesssim  \int_\rth \int_\rth\int_{\mathbb{S}^2} (p'^0)^{1/2}v_{\textup{\o}}g  \varphi(p')f(p)f(q)\delta(p'_3)d\omega dqdp,
\end{align} where we used \eqref{kernel condition} and that $\sigma_0(\theta)\le M$ and denoted $f(x_1,p)$ as $f(p)$ for the sake of notational simplicity. Since $(p')^0\le p^0+q^0$ by the energy conservation law \eqref{collision.invariants} (and hence $(p')^0\le 2 p^0q^0$), we have 
$\varphi(p')\le \varphi(p)\varphi(q),$ and hence\begin{align}\label{K}
K_E&\lesssim  \int_\rth \int_\rth\int_{\mathbb{S}^2} (p^0)^{1/2}(q^0)^{1/2}v_{\textup{\o}}g  \varphi(p)\varphi(q)f(p)f(q)\delta(p'_3)d\omega dqdp.
\end{align} 
Now by \eqref{p'}, we observe that
\begin{equation*}
    \int_{\mathbb{S}^2}\delta(p'_3)d\omega=\int_{\mathbb{S}^2}\delta\left(\frac{p_3+q_3}{2}+\frac{g}{2}\left(\omega_3+(\gamma-1)(p_3+q_3)\frac{(p+q)\cdot\omega}{|p+q|^2}\right)\right)d\omega.
\end{equation*}
Note that 
$$\omega_3+(\gamma-1)(p_3+q_3)\frac{(p+q)\cdot\omega}{|p+q|^2} = \left((0,0,1)+(\gamma-1)(p_3+q_3)\frac{(p+q)}{|p+q|^2}\right)\cdot \omega .$$ Then we consider another representation for $\omega$ using the spherical coordinates such that $z$-axis is especially chosen to be parallel to the vector $$Z\eqdef \left((0,0,1)+(\gamma-1)(p_3+q_3)\frac{(p+q)}{|p+q|^2}\right)$$ such that $\omega = (\sin\theta\cos\phi,\sin\theta\sin\phi,\cos\theta)$ and$$\cos\theta = \frac{Z}{|Z|}\cdot \omega.$$ Then the integral can be rewritten as
\begin{multline*}
    \int_{\mathbb{S}^2}\delta(p'_3)d\omega=\int_0^{2\pi}d\phi \int_0^\pi d\theta\  \sin\theta\ \delta\left(\frac{p_3+q_3}{2}+\frac{g}{2}|Z|\cos\theta\right)\\*
 =\int_0^{2\pi}d\phi \int_{-1}^1 dk\   \delta\left(\frac{p_3+q_3}{2}+\frac{g}{2}|Z|k\right)
=  \frac{4\pi}{g|Z|}\int_{-1}^1 dk\   \delta\left(k+ \frac{p_3+q_3}{g|Z|}\right), 
\end{multline*}where we made the change of variables $\theta\mapsto k\eqdef \cos\theta,$ and used $$\delta(Ak+B)=\frac{1}{|A|}\delta\left(k+\frac{B}{A}\right).$$
Note that 
\begin{equation}\begin{split}\label{one over Z estimate}
\frac{|p_3+q_3|}{g|Z|}&=\frac{|p_3+q_3|}{g\left|(0,0,1)+(\gamma-1)(p_3+q_3)\frac{(p+q)}{|p+q|^2}\right|}
=\frac{|p_3+q_3|}{g\left|(0,0,1)+\frac{(p_3+q_3)(p+q)}{\sqrt{s}(\pZ+\qZ+\sqrt{s})}\right|}\\
&=\frac{\sqrt{s}(\pZ+\qZ+\sqrt{s})|p_3+q_3|}{g\left|(0,0,\sqrt{s}(\pZ+\qZ+\sqrt{s}))+(p_3+q_3)(p+q)\right|}\\
&=\frac{\sqrt{s}(\pZ+\qZ+\sqrt{s})|p_3+q_3|}{g\sqrt{(p_3+q_3)^2\left(\sum_{i=1}^2(p_i+q_i)^2\right)+[\sqrt{s}(\pZ+\qZ+\sqrt{s})+(p_3+q_3)^2]^2}}\\
&=\frac{\sqrt{s}(\pZ+\qZ+\sqrt{s})}{g\sqrt{(p_1+q_1)^2+(p_2+q_2)^2+[\frac{\sqrt{s}(\pZ+\qZ+\sqrt{s})}{|p_3+q_3|}+|p_3+q_3|]^2}},
\end{split}\end{equation}
where the second identity is by \eqref{gamma minus one}. Therefore, we have
$$\frac{1}{g|Z|}=\frac{\sqrt{s}(\pZ+\qZ+\sqrt{s})}{g\sqrt{(p_3+q_3)^2\left(\sum_{i=1}^2(p_i+q_i)^2\right)+[\sqrt{s}(\pZ+\qZ+\sqrt{s})+|p_3+q_3|^2]^2}}.$$Then by computing the delta function in the integral with respect to $k,$ we have \begin{align*}
    K_E&\lesssim\int_\rth \int_\rth (p^0)^{1/2}(q^0)^{1/2}v_{\textup{\o}}g  \varphi(p)\varphi(q)f(p)f(q)\frac{4\pi}{g|Z|} dqdp\\
&\approx  \int_\rth \int_\rth dqdp\ \frac{g  \sqrt{s}}{p^0q^0}\\*&\qquad\times \frac{(p^0)^{1/2}(q^0)^{1/2}\varphi(p)\varphi(q)f(p)f(q)\sqrt{s}(\pZ+\qZ+\sqrt{s})}{ \sqrt{(p_3+q_3)^2\left(\sum_{i=1}^2(p_i+q_i)^2\right)+[\sqrt{s}(\pZ+\qZ+\sqrt{s})+|p_3+q_3|^2]^2}}\\
&\lesssim  \int_\rth \int_\rth dqdp\ \frac{g  \sqrt{s}}{p^0q^0} \frac{(p^0)^{1/2}(q^0)^{1/2}\varphi(p)\varphi(q)f(p)f(q)\sqrt{s}(\pZ+\qZ+\sqrt{s})}{ \sqrt{s}(\pZ+\qZ+\sqrt{s})}\\*
&\lesssim \int_\rth \int_\rth dqdp\ (p^0)^{1/2}(q^0)^{1/2}\varphi(p)\varphi(q)f(p)f(q)\lesssim \| f\|^2,
\end{align*}by \eqref{gamma minus one}, \eqref{K}, \eqref{one over Z estimate}, and the fact that $\sqrt{s}(\pZ+\qZ+\sqrt{s})$ is positive and 
$g^2,s\lesssim p^0q^0$. By taking ess$\sup$ over $x_1\in[0,1]$ and $\sup$ over all planes $E$ passing through 0, we complete the proof of Proposition \ref{prop.3.4}.
\end{proof}
\begin{remark}
In the proof above, one may ask whether the bound
\[
\left|\frac{p_3 + q_3}{g\,|Z|}\right| \le 1
\]
always holds. In general, this quantity can become arbitrarily large as $p$ approaches $q$.  
Indeed, in this regime we have $g \to 0$, and the numerator $p_3 + q_3$ does not compensate for this singularity unless $p_3 + q_3 = 0$.  

In such cases, the corresponding delta-function integral in $K_E$ vanishes identically, while the stated upper bound remains valid.  
This situation corresponds precisely to the case $p_3' \neq 0$.
\end{remark}

\subsection{Propagation of hyperplane estimate}

Let $f \in \mathcal{A}$ be a stationary solution as defined in \eqref{sol space}. Suppose further that the boundary profile $f_{LR}$ satisfies the hyperplane bound
$$
\|f_{LR}\|_{\textup{hyp}} \le M
$$
for some constant $M > 0$. We now establish the propagation of hyperplane estimate (Theorem \ref{thm:hyperplane}). 

\begin{proof}[Proof of Theorem \ref{thm:hyperplane}]
We recall the mild form representation \eqref{f representation for p1} for $f$ and define 
\begin{equation}
    \begin{split}
         (f)_1(x_1,p)&\eqdef 1_{\{p_1>0\}}f_L(p)\exp\left(-\frac{1}{\hat{p}_1}\int_{0}^{x_1} \mathcal{L}f(z,p)dz\right)\\
       (f)_2(x_1,p)&\eqdef  1_{\{p_1>0\}}
\frac{1}{\hat{p}_1}\int_0^{x_1}\exp\left(-\frac{1}{\hat{p}_1}\int_z^{x_1} \mathcal{L}f(z',p)dz'\right)Q^+(f,f)(z,p)dz\\
(f)_3(x_1,p)&\eqdef 1_{\{p_1<0\}}f_R(p)\exp\left(\frac{1}{\hat{p}_1}\int_{x_1}^1 \mathcal{L}f(z,p)dz\right),\textup{ and }\\
(f)_4(x_1,p)&\eqdef-1_{\{p_1<0\}}\frac{1}{\hat{p}_1}\int_{x_1}^1\exp\left(\frac{1}{\hat{p}_1}\int_{x_1}^z\mathcal{L}f(z',p)dz'\right)Q^+(f,f)(z,p)dz.
\end{split}
\end{equation}
In the remainder of this section, we estimate the hyperplane norms of $(f)_1$
and $(f)_2$.
 The rest of the terms $(f)_3$ and $(f)_4$ follow similar estimates and we omit them.

  Note that we have $\mathcal{L}f\ge C_l (p^0)^{1/2}$ via Proposition \ref{lowerL} and hence \begin{align*}
    \notag
\|(f)_1\|_{\textup{hyp}}&=\sup_E\int_E d\sigma_E(p)\ (p^0)^{1/2}\varphi(p) 1_{\{p_1>0\}}f_{LR}(p)\exp\left(-\frac{1}{\hat{p}_1}\int_{0}^{x_1} \mathcal{L}f(z,p)dz\right)\\
&\le \sup_E\int_E d\sigma_E(p)\ (p^0)^{1/2}\varphi(p) 1_{\{p_1>0\}}f_{LR}(p)\exp\left(-\frac{1}{\hat{p}_1}\int_{0}^{x_1} C_l(p^0)^{1/2}dz\right)\\
&\le \sup_E\int_E d\sigma_E(p)\ (p^0)^{1/2}\varphi(p) 1_{\{p_1>0\}}f_{LR}(p)\exp\left(-\frac{1}{\hat{p}_1} C_l(p^0)^{1/2}x_1\right)\\
&\le  \|f_{LR}\|_{\textup{hyp}}
\le M.
\end{align*}

On the other hand, regarding $(f)_2$ term (and similarly $(f)_4$ term), note that we have
\begin{align*}
    \notag
\|(f)_2\|_{\textup{hyp}}&=\sup_E\int_E d\sigma_E(p)\ (p^0)^{1/2}\varphi(p) 1_{\{p_1>0\}}\frac{1}{\hat{p}_1}\\&\qquad\qquad\times \sup_{x_1}\int_0^{x_1}\exp\left(-\frac{1}{\hat{p}_1}\int_z^{x_1} \mathcal{L}f(z',p)dz'\right) Q^+(f,f)(z,p)dz\\
&\le \sup_E\int_E d\sigma_E(p)\ (p^0)^{1/2}\varphi(p)1_{\{p_1>0\}}\frac{1}{\hat{p}_1}\\&\qquad\qquad\times  \sup_{x_1}\int_0^{x_1}\exp\left(-\frac{1}{\hat{p}_1}C_l(p^0)^{1/2} (x_1-z)\right)Q^+(f,f)(z,p)dz\\
&\le \sup_E\int_E d\sigma_E(p)\ (p^0)^{1/2}\varphi(p)1_{\{p_1>0\}}\frac{1}{\hat{p}_1}\|Q^+(f,f)(\cdot,p)\|_{L^\infty_{x_1}}\\*&\qquad\qquad\times \sup_{x_1}\int_0^{x_1}\exp\left(-\frac{1}{\hat{p}_1}C_l(p^0)^{1/2} (x_1-z)\right) dz\\
&=\sup_E\int_E d\sigma_E(p)\ (p^0)^{1/2}\varphi(p)1_{\{p_1>0\}}\frac{1}{\hat{p}_1}\|Q^+(f,f)(\cdot,p)\|_{L^\infty_{x_1}}\\&\qquad\qquad\times \sup_{x_1}\frac{\hat{p}_1}{C_l (p^0)^{1/2}}\left(1-\exp\left(-\frac{1}{\hat{p}_1}C_l(p^0)^{1/2} x_1\right) \right)\\&
\le \sup_E\int_E d\sigma_E(p)\ (p^0)^{1/2}\varphi(p)1_{\{p_1>0\}}\frac{1}{C_l (p^0)^{1/2}}\|Q^+(f,f)(\cdot,p)\|_{L^\infty_{x_1}}\\
&\le \frac{1}{C_l}\sup_E\int_E d\sigma_E(p)\ \varphi(p)\|Q^+(f,f)(\cdot,p)\|_{L^\infty_{x_1}}
\le \frac{1}{C_l} \| f\|^2
\le \frac{a_1^2}{C_l},
\end{align*} by Proposition \ref{prop.3.4} and Proposition \ref{lowerL}. Here, the constants $a_1 $ and $C_l$ are the same as those in the definition of the solution space $\mathcal{A}$ defined in \eqref{sol space}.

Altogether, given $a_1$, $M$ and $C_l$, we obtain $\|f\|_{\textup{hyp}}<+\infty.$ This completes the proof of the hyperplane estimate and hence concludes the
proof of Theorem~\ref{thm:hyperplane}.

\end{proof}

\section*{Acknowledgments}
 J. W. J. is supported by the National Research Foundation of Korea (NRF) grants RS-2023-00210484, RS-2023-00219980, 2022R1G1A1009044 and \\
 2021R1A6A1A10042944. S.-B. Y. is supported by the National Research Foundation of Korea (NRF) grant funded by the Korean government (MSIT) (RS-2023-NR076676).

\bibliographystyle{amsplaindoi}
\bibliography{bibliography.bib}{}

\end{document}